\documentclass[a4paper,11pt]{article}
\usepackage[a4paper,text={6.25in,9.0in},centering]{geometry}
\usepackage{amsmath}
\usepackage{amssymb}
\usepackage{amsfonts}
\usepackage{amsthm}
\usepackage{graphicx}
\usepackage{caption}
\usepackage{subcaption}
\usepackage{enumitem}

\usepackage{soul}
\usepackage{xcolor}
\parindent 2em

\pretolerance=9999
\hyphenpenalty=9999
\theoremstyle{plain}
\newtheorem{theorem}{\noindent\bf Theorem}[section]
\newtheorem{pro}[theorem]{Proposition}
\newtheorem{corollary}[theorem]{\noindent\bf Corollary}

\numberwithin{equation}{section}
\renewcommand{\theequation}{\arabic{section}.\arabic{equation}}

\theoremstyle{remark}
\newtheorem{definition}[theorem]{\noindent\bf Definition}%[section]
%[section]
\newtheorem{remark}[theorem]{\noindent\bf Remark}
\headsep=2.25em
\mathsurround2pt

\def\R{\mathbb{R}}
\def\Rg{\mathbb{R}_g}
\def\gi{g^{-1}}
\def\op{\oplus}
\def\om{\ominus}
\def\ot{\otimes}
\def\od{\odot}
\def\oti{\otimes^{-1}}
\def\pwr#1{^{(#1)}}
\def\Ln{{\rm Ln\,}}
\def\l{\lambda}

\def\be{\begin{eqnarray}}%%
\def\ee{\end{eqnarray}}%%
\def\ben{\begin{eqnarray*}}%%
\def\een{\end{eqnarray*}}%%
\def\benum{\begin{enumerate}}%%
\def\eenum{\end{enumerate}}%%

\newcommand\numberthis{\addtocounter{equation}{1}\tag{\theequation}}

\begin{document}
\begin{center}
	\Large\bf{ Classical inequalities for pseudo-integral}
	
\end{center}
\vspace{0.1in}

\begin{center}
	{\bf Pankaj Jain}\\
	Department of Mathematics\\
	South Asian University\\
	Akbar Bhawan, Chanakya Puri, New Delhi-110021, India\\
	(email: pankaj.jain@sau.ac.in, pankajkrjain@hotmail.com)
	\medskip
	
\end{center}
\bigskip

\begin{abstract}
In this paper, we have derived certain classical inequalities, namely, Young's, H\"older's, Minkowski's and Hermite-Hadamard inequalities for pseudo-integral  (also known as $g$-integral). For Young's, H\"older's, Minkowski's inequalities, both the cases $p>1$ as well as $p<1,\,p\ne 0$ have been covered. Moreover, in the case of Hermite-Hadamard inequality, a refinement has also been proved and as a special case, $g$-analogue of geometric-logarithmic-arithmetic inequality has been deduced. 
\end{abstract}

\noindent \textbf{Keywords:} Pseudo-operations; $g$-integral, $g$-H\"older's inequality; $g$-Minkowski's inequality; $g$-Young's inequality; $g$-Hermite-Hadamard inequality.
\medskip

\noindent 2010 Mathematics Subject Classification. 28A15, 28A25

\section{Introduction}
There has been a continued interest in the theory of fuzzy measures ever since Sugeno \cite{sug} defined this measure having monotonicity property instead of additivity. This notion has shown many applications in the theory of fuzzy sets defined by Zadeh \cite{zadeh}. Later on, several other measures were defined and studied which are non-additive and have some advantages over each other. We refer to \cite{cho}, \cite{ich}, \cite{mas}, \cite{weber1}, \cite{weber2} and references therein for such consideration.

In \cite{pap2} (see also \cite{pap1}, \cite{pap3}, \cite{pap4}, \cite{pap5}, \cite{pap6}, \cite{flores}), Pap initiated the so called $\op$-decomposable measures based on the pseudo algebraic operations, i.e., pseudo-addition $\op$ and pseudo-multiplication $\ot$. Consequently, in this framework, an integral and the derivative have been defined. Since the pseudo operations involve a generator function, usually denoted by $g$, the corresponding notions are also refered to as $g$-addition, $g$-multiplication, $g$-derivative, $g$-integral etc.

In the present paper, we make an attempt to contribute in $g$-calculus by deriving several classical inequalities in this framework, namely, Young's inequality, H\"older's inequality, Minkowski's inequality and Hermite-Hadamard inequality. Let us point out that for the case $1<p<\infty$, $g$-analogue of H\"older's and Minkowski's inequalities were derived in \cite{agahi} by using the corresponding classical inequalities. In our case, we derive these inequalities by using $g$-Young's inequality that we first prove in this paper. Moreover, we also establish certain other variants of H\"older's inequality and we cover the case $p<1,\, p\ne 0$ as well.

The paper is organized as follows. In Section 2, we collect the entire ``pseudo-machinery'' that is required throughout the paper. Here apart from the known notions and concepts, we define pseudo-logarithmic function. We also define pseudo-absolute value which helps us to establish the inequalities in a wider domain. Section 3, starts with Young's inequality followed by H\"older's inequality and its variants and finally we prove Minkowski's inequality in this section. In Section 4, we define pseudo-convexity and in this context prove the Hermite-Hadamard inequality. As a special case, the $g$-analogue of geometric-logarithmic-arithmetic mean inequality is obtained. Also in this section, we prove a refined version of Hermite-Hadamard inequality. Finally, in Section 5, we summarize the work done in this paper and make some suggestions for future work.

\section{The Pseudo Setting}
In this section, we collect some basic algebraic operations, elementary functions, derivative and integral in the framework of pseudo algebra. Most of these notions are already known, however, we define absolute value and logarithm formally here.

\subsection{The algebraic operations}

Let $\op$ denote the pseudo-addition, a function $\op:\R\times\R\to\R$ which is commutative, nondecreasing in each component, associative and has a zero element. We shall assume that $\op$ is strict pseudo-addition which means that $\op$ is strictly increasing in each component and continuous. By Aczel's Theorem \cite{ling}, for each strict pseudo-addition, there exists a monotone function $g$ (called the generator for $\op$), $g:\R\to\R^+$ such that
\begin{equation*}
	x\op y = \gi\left(g(x) + g(y)\right).
\end{equation*}
Similarly, we denote by $\ot$, the pseudo-multiplication which is a function $\op:\R\times\R\to\R$ which is commutative, nondecreasing in each component, associative and has a unit element. The operation $\ot$ is defined by
\begin{equation*}
	x\ot y = \gi\left(g(x) \cdot g(y)\right).
\end{equation*}
Now onwards, the set $\R$ equipped with the pseudo operations $\op$ and $\ot$ with the corresponding generator $g$ will be denoted by $\Rg$. The zero and the unit elements of $\Rg$  will be denoted, respectively, by $0_g$ and $1_g$. 

In \cite{bc}, the authors considered that the generator function $g:\R_g\to\R$ is strictly monotone (either strictly increasing or strictly decreasing), onto, $g(0_g)=0$, $g'(x)\ne 0$, for all $x$, $g\in C^2$ and $\gi\in C^2$. Using this map, the following well defined operations have been defined:
\begin{equation*}
x\om y =	\gi\left(g(x) - g(y)\right), \quad x\oti y=\gi\left( \frac{g(x)}{g(y)}\right),\,\,{\rm provided}\,\, y\ne 0_g.
\end{equation*} 

The order relation in $\Rg$, denoted by $\le_g$, satisfies the following:
\begin{equation*}
	x\le_g y \iff x\om y \le 0_g.
\end{equation*}
If $x\le_g y$, we can also write it as $y\ge_g x$. If $x\le_g y$ and $x\ne y$, we shall write it as $x<_gy$, or equivalently, $y>_gx$.

In order to make $\Rg$ a linear space over the field $\R$, we define the pseudo-scalar product:
\begin{equation*}
	n\odot x = \gi(ng(x)), \quad x\in \Rg,\,\, n\in \R.
\end{equation*}
It was pointed out in \cite{bc} that the operations $\odot$ and $\ot$ are different. There was a need to define the scalar product $\odot$ since the compatibility condition $1\odot x=x$ is not satisfied by $\ot$.

\begin{remark}
$(\Rg,\op,\ot,\le_g)$ is an ordered and complete algebra.
\end{remark}

\subsection{Differentiation and integration}

The pseudo-derivative or more commonly called $g$-derivative of a suitable function $f:[a,b]\subseteq\R\to\R_g$ is defined by
\begin{equation*}
	D^\op f(x):=\frac{d^\op f(x)}{dx}=\gi\left( (g\circ f)'(x)   \right).
\end{equation*}

In \cite{mar} (see also \cite{bc}), a more general $g$-derivative was defined as
\begin{equation*}
D_g f(x):=	\frac{d_g f(x)}{dx}=\lim_{h\to 0}\left[ f(x\op h)\om f(x)  \right]\oti h
\end{equation*}
and it was shown that
\begin{equation*}
D_g f(x)=\gi\Big((g\circ f)'(x)/g'(x)   \Big).
\end{equation*}
	
The pseudo-integral or the $g$-integral of a suitable function $f:[a,b]\subseteq \R\to\Rg$ is defined by
\begin{equation}\label{e2.1}
\int^\op_{[a,b]}f(t)\ot dt=\gi\left(  \int_a^b (g\circ f)(x)\,dx  \right).
\end{equation}
Similar to the notion of the $g$-derivative, in \cite{mar}, a more general $g$-integral was defined which is given by
\begin{equation*}%\label{e2.2}
	\int^g_{[a,b]}f(t)\ot dt=\gi\left(  \int_a^b (g\circ f)(x)g'(x)\,dx  \right).
\end{equation*}

\begin{remark}
	The $g$-derivative $D^\op$ and the $g$-integral $\displaystyle\int^\op_{[a,b]}$ can be obtained as special cases of, respectively, $D_g$ and $\displaystyle\int^g_{[a,b]}$. Since the justification requires notions which are beyond the scope of this paper, we refer to  \cite{bc}, \cite{mar} for details. Unless specified otherwise, in this paper, the $g$-derivative and $g$-integral will be used as represented by $D_g$ and $\displaystyle\int^g_{[a,b]}$, respectively.
\end{remark}

Some of the properties of $g$-integral are mentioned below:
\begin{itemize}
	\item [(a)] $\displaystyle \int^g_{[a,b]}(f\op h)\ot dt=\int^g_{[a,b]}f\ot dt \op \int^g_{[a,b]} h\ot dt$
	
	\item [(b)] $\displaystyle \int^g_{[a,b]}(\lambda\ot f)\ot dt=\lambda\ot\int^g_{[a,b]}f\ot dt $
	
	\item [(c)] $\displaystyle \int^g_{[a,b]}(\lambda\od f)\ot dt=\lambda\od\int^g_{[a,b]}f\ot dt $
	
	\item [(d)] $\displaystyle f\le_g h\Rightarrow \int^g_{[a,b]}f\ot dt \le_g \int^g_{[a,b]} h\ot dt$
\end{itemize}
\subsection{The exponent}
Define the set
\begin{equation*}
	\Rg^+=\{x\in\Rg : 0_g\le_g x\}.
\end{equation*}
In view of the operation $\ot$, for $x\in\Rg$ and $n\in\mathbb N$, we define
\begin{equation*}
	x\pwr n:=\underbrace{x\ot x\ot\hdots \ot x}_{n-{\rm times}}=\gi(g^n(x)).
\end{equation*}
This notion of exponent can be extended for a general $p\in (0,\infty)$ which is defined (see \cite{agahi}, \cite{ich}) for all $x\in \Rg^+$ as
\begin{equation*}
	x\pwr p=\gi(g^p(x)).
\end{equation*}
It can further be generalized to cover negative powers as well: For $p\in (0,\infty)$, we define
\begin{equation*}
	x\pwr {-p}=1_g\oti x\pwr p, \quad x\in \Rg^+.
\end{equation*}
For $p=0$, we define $x\pwr 0=1_g$.
It is easy to check that for $p,q\in (0,\infty)$ and $x\in\Rg^+$, the following laws of exponent hold:
\begin{enumerate}
\item [(i)]	$x\pwr p \ot x\pwr q = x\pwr {p+q}$
\item [(ii)] $\left( x\pwr p  \right)\pwr q=x\pwr {pq}$
\item [(iii)]	$(x\ot y) \pwr p =x\pwr p \ot y\pwr p$
\item [(iv)]	$(x\oti y) \pwr p =x\pwr p \oti y\pwr p$
\item [(v)]	$(\alpha\od x) \pwr p =\alpha^ p \od x\pwr p$
\end{enumerate}

\subsection{Absolute value}

For $x\in\Rg$, we define its $g$-absolute value as follows:
\begin{equation*}
	|x|_g:=
	\begin{cases}
		x,&{\rm if}\, 0_g\le_g x\\
		-x,& {\rm if}\, x<_g 0_g.
	\end{cases}
\end{equation*}
It can be seem that
\begin{equation*}
	|x|_g=\gi\left(|g(x)| \right).
\end{equation*}
Note that
$$
|x\op y|_g\le_g |x|_g \op |y|_g
$$
if $g$ is increasing and the inequality is reversed if $g$ is decreasing.

\subsection{Exponential and logarithm functions}

The $g$-exponential function for $x\in\Rg$ as defined in \cite{bc} is given by
\begin{equation*}
	E\pwr x=\gi(e^{g(x)}),
\end{equation*}
where $e^{g(x)}$ is the standard exponential function. It is natural to define $g$-logarithm function  by
\begin{equation*}
	\Ln x=\gi\left(\ln{g(x)}\right),
\end{equation*}
where $\ln g(x)$ is the standard logarithm function. 

\begin{remark}
	Unlike in the standard case, we may have that $E\pwr x<_g 0_g$. In fact, if the generator $g$ is monotonically decreasing, then for any $x\in \Rg$, $E\pwr x\le_g0$, since $\gi$ is also monotonically decreasing. This suggests that in the pseudo case, logarithm can be defined for "negative" numbers, which in fact is true if, again, the generator $g$ is decreasing. 
\end{remark}

Through the following proposition, we provide several properties of $E\pwr x$ and $\Ln x$, the proofs of which can be worked out easily.

\begin{pro}\label{pr2.4}	The following hold:
	\begin{enumerate}[label=\rm(\roman*)]
	\item $D_g(E\pwr x)=E\pwr x$
	
	\item $\displaystyle\int^g E\pwr x \ot dx =E\pwr x$
	
	\item $E\pwr x\ot E\pwr y=E\pwr{x\op y}$
	
	\item $E\pwr {\Ln x}=x$
	
	\item $\Ln E\pwr x=x$
	
	\item $\Ln (x\ot y)=\Ln x\op \Ln y$
	
%	\item $\Ln x\pwr p =\gi (p)\ot \Ln x$
	\end{enumerate}
	
\end{pro}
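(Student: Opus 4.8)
The plan is to exploit the fact that the generator $g$ conjugates every pseudo-operation into its ordinary arithmetic counterpart, so that each of the six assertions collapses to a familiar identity for the classical exponential and logarithm. Concretely, I would record at the outset the defining relations $g(E\pwr x)=e^{g(x)}$, $g(\Ln x)=\ln g(x)$, $g(x\op y)=g(x)+g(y)$ and $g(x\ot y)=g(x)\cdot g(y)$, together with $g\circ\gi=\mathrm{id}$. Since $g$ is a bijection, to prove any equality $u=v$ in $\Rg$ it suffices to verify $g(u)=g(v)$ in $\R$, and each statement will reduce to a one-line classical fact after applying $g$.

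For the algebraic identities (iii)–(vi) I would proceed by this direct unfolding. Applying $g$ to (iii) turns the left side into $e^{g(x)}e^{g(y)}$ and the right side into $e^{g(x)+g(y)}$, so it is exactly $e^{a}e^{b}=e^{a+b}$; statement (vi) is the mirror fact $\ln(st)=\ln s+\ln t$. For (iv) and (v) I would simply substitute: $g(E\pwr{\Ln x})=e^{g(\Ln x)}=e^{\ln g(x)}=g(x)$ gives (iv), and $g(\Ln E\pwr x)=\ln g(E\pwr x)=\ln e^{g(x)}=g(x)$ gives (v), after which bijectivity of $g$ yields the claims. No inequalities or ordering arguments enter here, so these are immediate.

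The two calculus statements (i) and (ii) are the only ones requiring a little care, because the general $g$-derivative and $g$-integral carry the weight factors $1/g'$ and $g'$, and one must see that they cancel. For (i) I would compute $(g\circ E\pwr{\cdot})'(x)=\bigl(e^{g(x)}\bigr)'=e^{g(x)}g'(x)$ by the chain rule, whence $D_g(E\pwr x)=\gi\bigl(e^{g(x)}g'(x)/g'(x)\bigr)=\gi(e^{g(x)})=E\pwr x$; the standing hypothesis $g'(x)\ne0$ makes the division legitimate. For (ii) I would compute directly (or invoke a pseudo fundamental theorem relating $D_g$ and $\int^g$): the integrand becomes $(g\circ E\pwr{\cdot})(x)\,g'(x)=e^{g(x)}g'(x)$, whose ordinary antiderivative is $e^{g(x)}$, so that $\gi$ returns $E\pwr x$. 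I expect the main, and rather modest, obstacle to be precisely this bookkeeping of the $g'$ factors in (i) and (ii); everything else is formal substitution.
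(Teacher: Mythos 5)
Your proof is correct and complete. The paper itself gives no proof of Proposition \ref{pr2.4} (it only remarks that the proofs ``can be worked out easily''), and your argument is exactly the routine verification being alluded to: since $g$ is a bijection intertwining the pseudo-operations with ordinary arithmetic, items (iii)--(vi) reduce to classical exponential/logarithm identities after applying $g$, while for (i) and (ii) the factor $g'(x)$ produced by the chain rule (respectively required by the integrand of $\int^g$) cancels against the $1/g'(x)$ in the definition of $D_g$ (respectively is absorbed as the derivative of $e^{g(x)}$), the standing assumption $g'(x)\ne 0$ justifying the division.
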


\section {Inequalities}

In this section, we shall prove $g$-analogue of Young's, H\"older's and Minkowski's inequalities. Here and throughout, for any $p\in\R,\,p\ne 0$, $p'$ will denote the conjugate index to $p$, i.e., ${1\over p} + {1\over {p'}}=1$.

\subsection {Young's inequality}

The classical Young's inequality asserts that for $1<p<\infty$ and $a,b>0$, it holds:
$$
ab \le \frac{a^p}{p} + \frac{b^{p'}}{p'}
$$
whereas the inequality gets reversed if $p<1,\,p\ne 0$. We prove the $g$-analogue of this inequality below:

\begin{theorem}\label{t-y1}
Let $1<p<\infty$.
\begin{enumerate}
\item [\rm (a)] If the generator $g$ is increasing then for all $a,b\in \Rg^+$, the following Young's type inequality holds:
\begin{equation}\label{e3.1}
	a\ot b \le_g \left( a\pwr p \oti\gi(p) \right) \op \left( b\pwr {p'} \oti\gi(p') \right).
\end{equation}

\item [\rm (b)] If the generator $g$ is decreasing then for all $a,b\in \Rg^+$, the  inequality \eqref{e3.1} holds in the reverse direction.
\end{enumerate}
\end{theorem}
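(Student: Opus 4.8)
The plan is to transport the whole statement through the generator $g$, since every pseudo-operation appearing in \eqref{e3.1} is a conjugate by $g$ of an ordinary arithmetic operation. Applying $g$ should collapse \eqref{e3.1} to the classical Young inequality on $\R$, and the monotonicity of $g$ will dictate the direction of the pseudo-inequality.

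First I would record the generator identities $g(a^{(p)})=g(a)^{p}$, $g(\gi(p))=p$, $g(x\oti y)=g(x)/g(y)$ and $g(x\op y)=g(x)+g(y)$, all immediate from the definitions in Section 2. Using them, the right-hand side $R$ of \eqref{e3.1} satisfies
$$g(R)=g\!\left[\left(a^{(p)}\oti\gi(p)\right)\op\left(b^{(p')}\oti\gi(p')\right)\right]=\frac{g(a)^{p}}{p}+\frac{g(b)^{p'}}{p'},$$
while the left-hand side satisfies $g(a\ot b)=g(a)\,g(b)$. Writing $A=g(a)$ and $B=g(b)$, the entire content of the theorem is then the single scalar estimate
$$AB\le\frac{A^{p}}{p}+\frac{B^{p'}}{p'},$$
which is exactly classical Young's inequality for nonnegative reals $A,B$ with conjugate exponents $p,p'$, and which I would simply invoke (or recover in one line from concavity of $\log$).

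It remains to read this scalar inequality back as a statement about $\le_g$. In both cases the classical estimate asserts $g(a\ot b)\le g(R)$; by the definition of the pseudo-order this says $a\ot b\le_g R$ when $g$ is increasing, proving (a), and $a\ot b\ge_g R$ when $g$ is decreasing, proving (b). Thus parts (a) and (b) rest on one and the same classical inequality, the sole difference being whether $g$ preserves or reverses order. The step that needs genuine care, and which I expect to be the main obstacle, is ensuring that the ordinary powers $g(a)^{p}$ and $g(b)^{p'}$ are well defined so that classical Young even applies, and reconciling this with the sign convention that membership $a,b\in\Rg^{+}$ together with $g(0_g)=0$ imposes on $g$ in the decreasing case; once the admissibility of the powers is settled, the reversal of direction for decreasing $g$ is automatic from the definition of $\le_g$.
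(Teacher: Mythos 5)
Your proposal is correct, and its engine --- conjugating everything by the generator so that the pseudo-inequality collapses to a classical scalar inequality, then reading the order back through the monotonicity of $g$ --- is also what drives the paper's proof; but the route is genuinely different. The paper never applies $g$ to both sides directly: it writes $a\ot b=E^{(\Ln a\op \Ln b)}$ using the pseudo-exponential and pseudo-logarithm of Proposition \ref{pr2.4}, rewrites $\Ln a=\gi\big(\tfrac1p\ln g^p(a)\big)$ and $\Ln b=\gi\big(\tfrac{1}{p'}\ln g^{p'}(b)\big)$, and then applies the convexity estimate $e^{u/p+v/p'}\le \tfrac1p e^u+\tfrac{1}{p'}e^v$ inside $\gi$ --- in other words, it inlines the standard proof of classical Young's inequality in pseudo-dress --- landing on $\gi\big(\tfrac1p g^p(a)+\tfrac{1}{p'}g^{p'}(b)\big)$, which it then checks equals the right-hand side of \eqref{e3.1}; that check is precisely your one-line computation of $g(R)$ read backwards. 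So your argument is a true shortcut: it treats classical Young as a black box and makes transparent that Theorem \ref{t-y1} is nothing but that inequality conjugated by $g$, whereas the paper re-derives Young from the convexity of the exponential and in doing so exercises the $E\pwr{x}$ and $\Ln$ calculus built in Section 2. Your handling of the two monotonicity cases (increasing $g$ preserves $\le_g$, decreasing $g$ reverses it) is exactly how the paper deduces part (b) from the single inequality \eqref{e3.6}. Finally, the well-definedness worry you flag is genuine and is \emph{not} addressed by the paper: for decreasing $g$ with $g(0_g)=0$, membership $a\in\Rg^+$ forces $g(a)\le 0$, so the real powers $g(a)^p$ --- and equally the paper's own $\ln g(a)$ and $g^p(a)$ in \eqref{e3.2}--\eqref{e3.6} --- are problematic; the paper's proof of (b) silently ignores this, so on that point your proposal is more careful than the source.
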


\begin{proof}
(a) In view of Proposition \ref{pr2.4}, we have
\begin{align}\label{e3.2}
	a\ot b &=E^{\Ln (a\ot b)}\nonumber\\
	&=E^{(\Ln a\op \Ln b)}.
\end{align}
Note that for any $1<p<\infty$
\begin{align*}
\Ln a\pwr p &=\gi\left( \ln g^p(a)  \right)\\
&=\gi(p\ln g(a))\\
&=\gi\Big(g(\gi(p))\cdot g(\gi(\ln g(a)))  \Big)\\
&=\gi(p)\ot \gi(\ln g(a))\\
&=\gi(p)\ot \Ln a
\end{align*}
which gives that
\begin{equation*}%\label{e3.3}
\Ln a\pwr p \oti \gi(p)=\Ln a.
\end{equation*}
Thus
\begin{align}\label{e3.4}
\Ln a &= \Ln a\pwr p \oti \gi(p)\nonumber\\
&=\gi\Big(\ln g^p(a)\Big) \oti \gi(p)\nonumber\\
&=\gi\left(\frac{\ln g^p(a)}{p}\right).
\end{align}
Similarly, since $1<p'<\infty$, we have	
\begin{equation}\label{e3.5}
\Ln b = \gi\left(\frac{\ln g^{p'}(b)}{p'}\right).
\end{equation}

By using \eqref{e3.4} and \eqref{e3.5} in \eqref{e3.2}, using a known inequality and increasingness of $\gi$ (since $g$ is so), we get
\begin{align}
a\ot b&=E^{\left(\gi\big(\frac{\ln g^p(a)}{p}\big)\op  \gi\big(\frac{\ln g^{p'}(b)}{p'}\big) \right)	}\nonumber\\
&=E^{\left(\gi {\left({1\over p}\ln g^p(a) + {1\over p'}\ln g^{p'}(b)    \right)} \right)	}\nonumber\\
&=\gi \left( e^{\left({1\over p}\ln g^p(a) + {1\over p'}\ln g^{p'}(b)    \right)} \right)\nonumber\\
&\le_g \gi {\left({1\over p}e^{\ln g^p(a)} + {1\over p'}e^{\ln g^{p'}(b)}    \right)}\label{e3.6}\\
&=\gi {\left({1\over p}{g^p(a)} + {1\over p'}{g^{p'}(b)}    \right)}=:A.\label{e3.7}
\end{align}

Further, we find that
$$
a\pwr p \oti\gi(p)=\gi(g^p(a)) \oti\gi(p) = \gi\left(\frac{g^p(a)}{p}\right)
$$
and similarly
$$
b\pwr {p'} \oti\gi(p')=\gi(g^p(a)) \oti\gi(p) = \gi\left(\frac{g^{p'}(b)}{p'}\right)
$$
so that
\begin{equation}\label{e3.8}
a\pwr p \oti\gi(p) \op 	b\pwr {p'} \oti\gi(p') = 
\gi\left(\frac{g^p(a)}{p} +  \frac{g^{p'}(b)}{p'}  \right)=:B.
\end{equation}
The assertion now follows in view of \eqref{e3.7} and \eqref{e3.8} since $A\om B= 0_g$.
\medskip

(b) Since the generator $g$ is decreasing and consequently $\gi$ is so,  the inequality \eqref{e3.6} gets reversed and the assertion follows. 
\end{proof}

Below we prove $g$-Young's inequality for the case $p<1,\,p\ne 0$.

\begin{theorem}\label{t-y2}
	Let $p<1,\,p\ne 0$. 
	\begin{enumerate}
		\item [\rm (a)] If the generator $g$ is increasing then for all $a,b\in \Rg^+$, the following Young's type inequality holds:
		\begin{equation}\label{e-y1}
			a\ot b \ge_g \left( a\pwr p \oti\gi(p) \right) \op \left( b\pwr {p'} \oti\gi(p') \right).
		\end{equation}
		
		\item [\rm (b)] If the generator $g$ is decreasing then for all $a,b\in \Rg^+$, the  inequality \eqref{e-y1} holds in the reverse direction.
	\end{enumerate}
\end{theorem}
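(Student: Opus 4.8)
The plan is to follow the proof of Theorem \ref{t-y1} essentially verbatim, the only genuine change being that the \emph{classical} Young inequality is replaced by its reverse, which is precisely what holds in the regime $p<1,\ p\ne0$. First I would reproduce the identity $a\ot b=E\pwr{(\Ln a\op\Ln b)}$ from \eqref{e3.2} together with the computations \eqref{e3.4} and \eqref{e3.5}. Those computations rely only on the law $g^p(a)=(g(a))^p$ and on $\ln(x^p)=p\ln x$, both valid for every $p\ne0$; hence they transfer without change and give
\begin{equation*}
\Ln a=\gi\!\left(\frac{\ln g^p(a)}{p}\right),\qquad \Ln b=\gi\!\left(\frac{\ln g^{p'}(b)}{p'}\right).
\end{equation*}

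Next, exactly as in the passage leading to \eqref{e3.6}, I would arrive at the expression $\gi\!\left(e^{\frac1p\ln g^p(a)+\frac1{p'}\ln g^{p'}(b)}\right)$ and apply the numerical inequality relating the exponential of a weighted sum to the weighted sum of exponentials. The crucial observation is the sign of the weights: since $\frac1p+\frac1{p'}=1$ while $p<1,\ p\ne0$, exactly one of $\frac1p,\frac1{p'}$ is negative. Consequently the weighted arithmetic--geometric inequality (equivalently, Jensen's inequality for the convex function $e^{t}$) \emph{reverses}, yielding
\begin{equation*}
e^{\frac1p\ln g^p(a)+\frac1{p'}\ln g^{p'}(b)}\ \ge\ \frac1p\,g^p(a)+\frac1{p'}\,g^{p'}(b).
\end{equation*}
This is the reverse classical Young inequality, and it is the sole structural departure from Theorem \ref{t-y1}.

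Finally I would apply $\gi$ to both sides and read off the resulting order in $\Rg$. For part (a), $g$ increasing makes $\gi$ increasing, so it preserves the numerical $\ge$ and delivers $a\ot b\ge_g(a\pwr p\oti\gi(p))\op(b\pwr{p'}\oti\gi(p'))$, i.e.\ \eqref{e-y1}; the right-hand side is identified through the same manipulation as in \eqref{e3.8}. For part (b), $g$ decreasing makes $\gi$ decreasing, so it flips the numerical inequality and produces the reversed order $\le_g$. I expect the only delicate point to be the justification that exactly one conjugate weight is negative and that this forces the convexity inequality to reverse; once that sign analysis is in place, the remainder is a transcription of the proof of Theorem \ref{t-y1}.
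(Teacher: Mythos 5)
Your argument is correct, but it follows a genuinely different route from the paper's own proof. The paper never revisits the convexity step: assuming without loss of generality that $0<p<1$ (so $p'<0$; otherwise interchange $p$ and $p'$), it sets $r=1/p$ and $s=-p'/p$, which are conjugate exponents in $(1,\infty)$, and applies the already-established inequality \eqref{e3.1} to $x=(a\ot b)\pwr p$ and $y=b\pwr{-p}$; one computes $x\ot y=a\pwr p$, while the two right-hand terms turn out to be pseudo-multiples of $a\ot b$ and of $b\pwr{p'}$, so that rearranging (numerically, dividing through by $p>0$) yields \eqref{e-y1}, and part (b) is obtained by invoking Theorem \ref{t-y1}(b) in the same scheme. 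You instead rerun the proof of Theorem \ref{t-y1} and replace the Jensen step \eqref{e3.6} by the reverse Jensen inequality for weights summing to $1$ with exactly one negative weight. Your sign analysis is right: $0<p<1$ gives $1/p>1$ and $1/p'<0$, while $p<0$ swaps the roles, so the reversal applies in all cases; and the inequality $e^{\lambda u+(1-\lambda)v}\ge\lambda e^{u}+(1-\lambda)e^{v}$ for $\lambda>1$ is a standard one-line consequence of convexity (write $u$ as a convex combination of $\lambda u+(1-\lambda)v$ and $v$, then multiply by $\lambda$) --- you should include that line explicitly, since the ``known inequality'' used at \eqref{e3.6} is the direct Jensen inequality, not its reverse. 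The trade-off: your proof is self-contained, needs no case reduction, and treats $0<p<1$ and $p<0$ symmetrically; the paper's proof is shorter and requires no new analytic input beyond Theorem \ref{t-y1}, being the classical conjugation trick that derives reverse Young from direct Young. Both arguments share the paper's implicit assumptions (positivity of $g(a),g(b)$ so that $\Ln a$, $\Ln b$ make sense, and $b\ne 0_g$ for negative powers), so neither falls below the paper's own level of rigor.
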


\begin{proof}
(a) Without any loss of generality, we assume that $0<p<1$ so that $p'<0$ for otherwise we can interchange the roles of $p$ and $p'$.

Take $r=1/p$ and $s=-p'/p$. Then $1<r,s<\infty$ and $1/r + 1/s = 1$. Set
$$
x=(a\ot b)\pwr p\quad {\rm and}\quad y=b\pwr {-p}.
$$
We apply inequality \eqref{e3.1} on $x,y$ and with exponents $r,s$ and obtain
\begin{equation}\label{e-y3}
	x\ot y \le_g \left( x\pwr r \oti\gi(r) \right) \op \left( y\pwr {s} \oti\gi(s) \right).
\end{equation}
Now, it can be calculated that
$$
x\ot y=a\pwr p,\quad x\pwr r \oti\gi(r)=(a\ot b)\ot \gi(\frac{1}{p}),\quad y\pwr {s} \oti\gi(s) = b\pwr {p'}\oti \gi (\frac{-p'}{p})
$$
which on substituting in \eqref{e-y3} and rearranging the terms give the result.
\medskip

(b) This can be obtained using the similar arguments and applying Theorem \ref{t-y1}(b).
\end{proof}

\subsection{H\"older's inequality}

Let $1<p<\infty$. We denote by $L_p(\R)$, the Lebesgue space which consists of all measurable functions defined on $\R$ such that
$$
\|f\|_{p,\R}=\left(\int_\R |f(x)|^p\, dx\right)^{1/p}<\infty.
$$
By a weight function, we mean a function which is measurable, positive and finite almost everywhere (a.e) on $\R$. For a weight function $w$, we denote by $L_{p,w}(\R)$, the weighted Lebesgue space which consists of all measurable functions defined on $\R$ such that
$$
\|f\|_{p,w,\R}=\left(\int_\R |f(x)|^p w(x)\, dx\right)^{1/p}<\infty.
$$
The spaces $L_p(\R)$ and $L_{p,w}(\R)$ are both Banach spaces.

Let $f:\R\to \Rg$ be a mesurable function and $p\in\R,\, p\ne 0$. We define
\begin{equation}\label{l1}
	[f]_{p,\Rg}^g:=\left(\int_{\Rg}^g |f(x)|_g\pwr p\ot dx\right)\pwr {{1\over p}}\quad {\rm and}\quad [f]_{p,g,\R}:=\left(\int_{\R} |f(x)|^pg'(x)\,dx\right)^{{1\over p}}.
\end{equation}

\begin{remark}\label{r-3.3}
	The expression $[f]_{p,g,\R}$ in \eqref{l1} is not a norm of some weighted Lebesgue space unless $1<p<\infty$ and  $g'$ qualifies to be a weight function which can be a case if, e.g., $g$ is an increasing function and consequently $g'>0$. In such a case, $[f]_{p,g,\R}$ will enjoy all the properties of a norm.
	
\end{remark}

As an independent interest, it is easy to observe that the expressions $[f]_{p,\R_g}^g$ and  $[f]_{p,g,\R}$ are connected. Precisely, we prove the following:

\begin{pro}\label{t-l1}
Let $f:\R\to \Rg$ be a measurable function and $p\in\R,\, p\ne 0$. Then
$$
[f]_{p,\Rg}^g = \gi\Big([g\circ f]_{p,g,\R} \Big).
$$	
\end{pro}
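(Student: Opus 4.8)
The plan is to prove the identity by a direct unwinding of the definitions, the whole computation resting on the two cancellation rules $g\circ\gi=\mathrm{id}$ and $\gi\circ g=\mathrm{id}$. First I would compute the integrand appearing on the left. Since $|f(x)|_g=\gi\big(|(g\circ f)(x)|\big)$, applying $g$ gives $g\big(|f(x)|_g\big)=|(g\circ f)(x)|$, and therefore, by the definition $z\pwr p=\gi\big(g^p(z)\big)$,
$$
|f(x)|_g\pwr p=\gi\Big(|(g\circ f)(x)|^p\Big).
$$
Applying $g$ once more yields $g\big(|f(x)|_g\pwr p\big)=|(g\circ f)(x)|^p$, which is exactly the ordinary integrand that occurs in $[g\circ f]_{p,g,\R}$.

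Next I would feed this into the $g$-integral. Using the definition $\int^g_{[a,b]}h\ot dt=\gi\big(\int_a^b (g\circ h)(x)\,g'(x)\,dx\big)$ over $\R$, the previous computation collapses the pseudo-integral to an ordinary weighted integral:
$$
\int_{\Rg}^g |f(x)|_g\pwr p\ot dx=\gi\left(\int_\R |(g\circ f)(x)|^p\,g'(x)\,dx\right).
$$
It then remains to apply the outer pseudo-power $\pwr{1/p}$. Writing $Z:=\int_{\Rg}^g |f(x)|_g\pwr p\ot dx=\gi(I)$ with $I:=\int_\R |(g\circ f)(x)|^p\,g'(x)\,dx$, we have $g(Z)=I$, and hence
$$
Z\pwr{1/p}=\gi\big(g^{1/p}(Z)\big)=\gi\big(I^{1/p}\big)=\gi\left(\left(\int_\R |(g\circ f)(x)|^p\,g'(x)\,dx\right)^{1/p}\right),
$$
which is precisely $\gi\big([g\circ f]_{p,g,\R}\big)$, as claimed.

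The computation is routine; the one point that needs care — and which I expect to be the only real obstacle — is the well-definedness of the two pseudo-powers, since $z\pwr p$ was defined only for $z\in\Rg^+$. I would therefore first check the relevant $\Rg^+$-memberships: the value $|f(x)|_g$ is a $g$-absolute value and so lies in $\Rg^+$ by construction, while $Z\ge_g 0_g$ follows from property (d) of the $g$-integral applied to the pointwise bound $|f(x)|_g\pwr p\ge_g 0_g$. Because the proposition allows $p<0$, I would also note that the identity $z\pwr r=\gi\big(g(z)^r\big)$ used twice above persists for negative exponents: for $r=-q<0$ one has $z\pwr{-q}=1_g\oti z\pwr q=\gi\big(1/g(z)^q\big)=\gi\big(g(z)^r\big)$, using $g(1_g)=1$. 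With these facts in place the displayed manipulations are justified, and the argument is insensitive to whether $g$ is increasing or decreasing, since only the algebraic cancellations $g\circ\gi=\gi\circ g=\mathrm{id}$ enter.
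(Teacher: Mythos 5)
Your proof is correct and takes essentially the same approach as the paper's: unwind $|f(x)|_g\pwr p=\gi\big(|(g\circ f)(x)|^p\big)$, collapse the pseudo-integral to $\gi\left(\int_\R |(g\circ f)(x)|^p\,g'(x)\,dx\right)$, and then apply the outer power $\pwr{1/p}$. The only difference is that you explicitly justify the $p<0$ case (via $z\pwr{-q}=1_g\oti z\pwr q$ and $g(1_g)=1$) and the $\Rg^+$ well-definedness checks, details the paper compresses into ``similar arguments can be employed for $p<0$.''
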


\begin{proof}
Let $p>0$. We have
$$
|f(x)|_g\pwr p = \Big(\gi(|g(f(x))|)\Big)\pwr p = \gi\Big(|g(f(x))|^p\Big)
$$
so that
$$
\int_{\Rg}^g |f(x)|_g\pwr p\ot dx = \gi\left(\int_\R |g(f(x))|^p g'(x)\, dx\right)
$$
which gives that
$$
[f]_{p,\Rg}^g = \gi\left(\left(\int_\R |g(f(x))|^p g'(x)\, dx\right)^{1/p} \right)
$$
and we are done in this case. Similar arguments can be employed to prove the assertion for $p<0$.
\end{proof}

An element $x\in\R_g$ is said to be finite, written $x<_g\infty$, if there exists $c\in\R_g$ such that $x<_g c$.

We prove the following H\"older's type inequality:

\begin{theorem}\label{t-h1}
Let $1<p<\infty$. 
\begin{enumerate}
	\item [\rm (a)] Let the generator $g$ be increasing and  $f,h:\R\to \Rg$ be measurable functions such that $[f]_{p,\Rg}^g<_g\infty$ and $[h]_{p',\Rg}^g<_g\infty$. Then $[f\ot h]^g_{1,\Rg}<_g\infty$ and the following H\"older's type inequality holds:
	\begin{equation}\label{e3.9}
		[f\ot h]^g_{1,\Rg}\le_g [f]^g_{p,\Rg}\ot [h]^g_{p',\Rg}.
	\end{equation}
	
	\item [\rm (b)] If the generator $g$ is decreasing then  the  inequality \eqref{e3.9} holds in the reverse direction.
\end{enumerate}	
\end{theorem}

\begin{proof}
(a) We shall be using Theorem \ref{t-y1} for appropriate $a,b\in\Rg^+$. Choose
\begin{equation}\label{e3.10}
	a=|f(x)|_g \oti [f]^g_{p,\Rg}
\end{equation}
and
\begin{equation}\label{e3.11}
	b=|h(x)|_g \oti [h]^g_{p',\Rg}.
\end{equation}	

We have
\begin{equation}\label{e3.12}
\int_{\Rg}^g a\ot b \ot dx = \left( \int_{\Rg}^g |f(x)\ot h(x)|_g\ot dx \right)	\oti \left( [f]^g_{p,\Rg}\ot [h]^g_{p',\Rg} \right).
\end{equation}

Further, it can be seen that
\begin{equation*}
a\pwr p = |f(x)|_g\pwr p \oti \Big([f]^g_{p,\Rg}\Big)\pwr p
\end{equation*}
so that
\begin{equation*}
	{a\pwr p}\oti {\gi(p)} =\left\{ |f(x)|_g\pwr p \oti \Big([f]^g_{p,\Rg}\Big)\pwr p \right\} \oti {\gi(p)}
\end{equation*}
which on $g$-integrating gives
\begin{align*}
\int_{\Rg}^g {a\pwr p}\oti {\gi(p)} \ot dx &= \left\{\left(\int_{\Rg}^g |f(x)|_g\pwr p \ot dx \right)\oti [f]^g_{p,\Rg} \right\} \oti {\gi(p)}\\
&=1_g\oti \gi(p)\\
&=\gi\Big(\frac{g(1_g)}{p}\Big).
\end{align*}
Similarly, one can obtain that
\begin{equation*}
\int_{\Rg}^g {b\pwr {p'}}\oti {\gi(p')} \ot dx	= \gi\Big(\frac{g(1_g)}{p'}\Big)
\end{equation*}
which together with the last equation gives
\begin{align} \label{e3.13}
\left(\int_{\Rg}^g {a\pwr p}\oti {\gi(p)} \ot dx\right)	
\op \left(\int_{\Rg}^g {b\pwr {p'}}\oti {\gi(p')} \ot dx\right)
&= \gi\Big(\frac{g(1_g)}{p}\Big) \op \gi\Big(\frac{g(1_g)}{p'}\Big)\nonumber\\
&=\gi\Big(\frac{g(1_g)}{p}+ \frac{g(1_g)}{p'}\Big)\nonumber\\
&=\gi(g(1_g))\nonumber\\
&=1_g.
\end{align}
Now, the inequality \eqref{e3.9} follows in view of \eqref{e3.12} and \eqref{e3.13} if we take $a$ and $b$ given, respectively, by \eqref{e3.10} and \eqref{e3.11} in Theorem \ref{t-y1} and take $g$-integral on both the sides of the resulting inequality.

(b) This follows since the inequality \eqref{e3.1} gets reversed for decreasing $g$.
\end{proof}

A generalized version of Theorem \ref{t-h1} is the following:

\begin{theorem}\label{t-h2}
	Let $1<p,q,r<\infty$ be such that ${1\over p}+{1\over q} = {1\over r}$. 
	\begin{enumerate}
		\item [\rm (a)] Let the generator $g$ be increasing and  $f,h:\R\to \Rg$ be measurable functions such that $[f]_{p,\Rg}^g<_g\infty$ and $[h]_{p',\Rg}^g<_g\infty$. Then $[f\ot h]^g_{r,\Rg}<_g\infty$ and the following H\"older's type inequality holds:
		\begin{equation}\label{e3.14}
			[f\ot h]^g_{r,\Rg}\le_g [f]^g_{p,\Rg}\ot [h]^g_{q,\Rg}.
		\end{equation}
		
		\item [\rm (b)] If the generator $g$ is decreasing then the  inequality \eqref{e3.14} holds in the reverse direction.
	\end{enumerate}	
\end{theorem}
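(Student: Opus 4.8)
The plan is to reduce the generalized inequality \eqref{e3.14} to the basic H\"older inequality of Theorem \ref{t-h1} by rescaling the exponents, exactly as in the classical argument. Set $P=p/r$ and $Q=q/r$; the hypothesis $\frac1p+\frac1q=\frac1r$ gives $\frac1P+\frac1Q=r\big(\frac1p+\frac1q\big)=1$, so $(P,Q)$ is a conjugate pair with $1<P,Q<\infty$. Since the power $\pwr r$ is defined only on $\Rg^+$, I would apply Theorem \ref{t-h1}(a) not to $f,h$ directly but to the nonnegative functions $F:=|f|_g\pwr r$ and $H:=|h|_g\pwr r$ with the conjugate indices $P,Q$, obtaining
\begin{equation*}
[F\ot H]^g_{1,\Rg}\le_g [F]^g_{P,\Rg}\ot [H]^g_{Q,\Rg}.
\end{equation*}

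The crucial computational step is the homogeneity identity
\begin{equation*}
[\,\phi\pwr r\,]^g_{s,\Rg}=\big([\phi]^g_{rs,\Rg}\big)\pwr r,\qquad \phi\in\Rg^+,\ r,s>0,
\end{equation*}
which I would establish by unwinding the definitions in the spirit of Proposition \ref{t-l1}: one has $|\phi\pwr r|_g\pwr s=\gi(|g\circ\phi|^{rs})$, so the inner $g$-integral is $\gi\big(\int_\R|g\circ\phi|^{rs}g'\,dx\big)$, and raising to $\pwr{1/s}$ matches the corresponding computation for $\big([\phi]^g_{rs,\Rg}\big)\pwr r$. Granting this, and using the exponent law (iii) together with $|f|_g\ot|h|_g=|f\ot h|_g$, the left-hand side becomes $F\ot H=|f\ot h|_g\pwr r$, hence $[F\ot H]^g_{1,\Rg}=\big([f\ot h]^g_{r,\Rg}\big)\pwr r$; and since $rP=p$, $rQ=q$ and each norm depends only on the $g$-absolute value, the factors on the right become $[F]^g_{P,\Rg}=\big([f]^g_{p,\Rg}\big)\pwr r$ and $[H]^g_{Q,\Rg}=\big([h]^g_{q,\Rg}\big)\pwr r$. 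Applying (iii) once more, the displayed inequality turns into
\begin{equation*}
\big([f\ot h]^g_{r,\Rg}\big)\pwr r\le_g\big([f]^g_{p,\Rg}\ot [h]^g_{q,\Rg}\big)\pwr r.
\end{equation*}

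To finish I would cancel the outer $r$-th power. With $g$ increasing, $u\le_g v$ is equivalent to $g(u)\le g(v)$, and for $u,v\in\Rg^+$ the relation $u\pwr r\le_g v\pwr r$ reads $(g(u))^r\le(g(v))^r$, which is equivalent to $g(u)\le g(v)$ because $r>0$ and $g(u),g(v)\ge 0$; thus $\pwr r$ is strictly $\le_g$-monotone and can be inverted, delivering \eqref{e3.14}, while $[f\ot h]^g_{r,\Rg}<_g\infty$ is read off from the finiteness of the right-hand side. Part (b) is identical, invoking Theorem \ref{t-h1}(b) so that the basic inequality reverses and hence so does \eqref{e3.14}. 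I expect the only real friction to lie in handling the homogeneity identity cleanly through the $g$-absolute value and the fractional exponents; once it is secured the remainder is purely formal.
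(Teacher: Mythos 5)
Your proposal is correct and follows essentially the same route as the paper: rescale to the conjugate pair $P=p/r$, $Q=q/r$, apply Theorem \ref{t-h1} to the $r$-th powers, identify $[F\ot H]^g_{1,\Rg}=\big([f\ot h]^g_{r,\Rg}\big)\pwr r$, and cancel the outer power. Two minor points in your favor: your homogeneity computation $[F]^g_{P,\Rg}=\big([f]^g_{p,\Rg}\big)\pwr r$ is the correct one (the paper's proof misprints this exponent as $\pwr{r/p}$, which would not yield \eqref{e3.14} after adjusting exponents), and your use of $|f|_g\pwr r,\,|h|_g\pwr r$ rather than $f\pwr r,\,h\pwr r$ correctly keeps the fractional powers defined on $\Rg^+$.
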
	

\begin{proof} (a)
Write $P=p/r$, $Q=q/r$ so that ${1\over P}+{1\over Q} = 1$. Applying the inequality \eqref{e3.9} for the functions $F:=f\pwr r,\,H:=h\pwr r$ and with the exponents $P,Q$, we obtain
\begin{equation}\label{e3.15}
	[F\ot H]^g_{1,\Rg}\le_g [F]^g_{P,\Rg}\ot [H]^g_{Q,\Rg}.
\end{equation}
It can be calculated that
\begin{align*}
[F]^g_{P,\Rg} &= \left( [f]^g_{p,\Rg} \right)\pwr{r/p},\\	
[H]^g_{Q,\Rg} &= \left( [h]^g_{q,\Rg} \right)\pwr{r/q}\\
\end{align*}
and
\begin{equation*}
	[F\ot H]^g_{1,\Rg}=\left( [f\ot h]^g_{r,\Rg}\right)\pwr{r}
\end{equation*}
using which in \eqref{e3.15} and adjusting the exponents, \eqref{e3.14} follows.
\medskip

(b) This is an immediate consequence of the fact that $g$ is decreasing.
\end{proof}

As a consequence of Theorem \ref{t-h2}, we prove the following:
	
\begin{theorem}
	Let $1<p,q,r<\infty$ be such that ${t\over p}+{{1-t}\over q} = {1\over r}$, where $0<t<1$.
	\begin{enumerate}
		\item [\rm (a)] If the generator $g$ is increasing and for all measurable functions $f:\R\to \Rg$,
	$$
	[f]^g_{p,\Rg}<_g \infty \quad {and}\quad [f]^g_{q,\Rg}<_g \infty,
	$$	
then $[f]^g_{r,\Rg}<_g \infty$ and the following inequality holds:
		\begin{equation}\label{e3.16}
			[f]^g_{r,\Rg}\le_g \Big([f]^g_{p,\Rg}\Big)\pwr t \ot \Big([f]^g_{q,\Rg}\Big)\pwr {1-t}.
		\end{equation}
		
		\item [\rm (b)] If the generator $g$ is decreasing then the  inequality \eqref{e3.16} holds in the reverse direction.
	\end{enumerate}	
\end{theorem}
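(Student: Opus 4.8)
The plan is to read \eqref{e3.16} off the generalized H\"older inequality of Theorem \ref{t-h2} by splitting $|f|_g$ multiplicatively and choosing the exponents so that the two factors are measured exactly by the $p$- and $q$-norms. Since the $g$-norm of $f$ depends only on $|f|_g$, and $|f|_g\ge_g 0_g$ lies in $\Rg^+$, I would set
$$F:=|f|_g\pwr t,\qquad H:=|f|_g\pwr{1-t},$$
both of which are again $\Rg^+$-valued. By the first law of exponents, $F\ot H=|f|_g\pwr t\ot|f|_g\pwr{1-t}=|f|_g\pwr 1=|f|_g$. The exponents are then forced: put $P:=p/t$ and $Q:=q/(1-t)$. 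Because $0<t<1$ and $p,q>1$ we have $1<P,Q<\infty$, and, crucially,
$$\frac1P+\frac1Q=\frac tp+\frac{1-t}q=\frac1r,$$
so Theorem \ref{t-h2}(a) applies to $F\ot H$ with these exponents and target index $r$, giving $[F\ot H]^g_{r,\Rg}\le_g [F]^g_{P,\Rg}\ot[H]^g_{Q,\Rg}$.

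It then remains to identify the three $g$-norms. On the left, $F\ot H=|f|_g$, and since $|f|_g\ge_g 0_g$ its own $g$-absolute value equals $|f|_g$, so $[F\ot H]^g_{r,\Rg}=[f]^g_{r,\Rg}$. For the right-hand factors I would use that $F$ is already nonnegative, so $|F(x)|_g=|f(x)|_g\pwr t$, and then the second law of exponents gives $|F(x)|_g\pwr P=|f(x)|_g\pwr{tP}=|f(x)|_g\pwr p$; hence
$$[F]^g_{P,\Rg}=\left(\int_{\Rg}^g |f|_g\pwr p\ot dx\right)\pwr{1/P}=\left([f]^g_{p,\Rg}\right)\pwr t,$$
the final equality using $1/P=t/p$ together with law (ii). The symmetric computation yields $[H]^g_{Q,\Rg}=\left([f]^g_{q,\Rg}\right)\pwr{1-t}$. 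Substituting these three identifications into the H\"older inequality above produces \eqref{e3.16}. The finiteness statement is automatic: each $g$-power is a strictly monotone bijection, so $[f]^g_{p,\Rg}<_g\infty$ and $[f]^g_{q,\Rg}<_g\infty$ force $[F]^g_{P,\Rg}<_g\infty$ and $[H]^g_{Q,\Rg}<_g\infty$, whence $[f]^g_{r,\Rg}<_g\infty$.

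Part (b) requires nothing new, since the entire argument is a single application of Theorem \ref{t-h2}, whose inequality merely reverses when $g$ is decreasing; thus \eqref{e3.16} reverses accordingly. I expect the only real point of care to be the exponent bookkeeping in the middle paragraph — establishing $|F|_g\pwr P=|f|_g\pwr p$ and checking that the outer $1/P$-power recombines to give precisely the $t$-th $g$-power of $[f]^g_{p,\Rg}$ (and the analogue for $q$). Everything else is a direct invocation of the generalized H\"older inequality, so no genuinely hard step is anticipated.
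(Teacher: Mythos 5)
Your proposal is correct and follows essentially the same route as the paper: both apply the generalized H\"older inequality (Theorem \ref{t-h2}) to the factorization of $f$ into its $t$-th and $(1-t)$-th $g$-powers with exponents $p/t$ and $q/(1-t)$, then identify $[f\pwr t]^g_{p/t,\Rg}=\big([f]^g_{p,\Rg}\big)\pwr t$ and $[f\pwr{1-t}]^g_{q/(1-t),\Rg}=\big([f]^g_{q,\Rg}\big)\pwr{1-t}$. Your version is slightly more careful in two harmless respects — you work with $|f|_g$ so that the $g$-powers are genuinely defined on $\Rg^+$, and you write out the exponent computations the paper dismisses with ``it can be worked out'' — but the underlying argument is identical.
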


\begin{proof}
(a) Applying Theorem \ref{t-h2} for the functions $f\pwr t,\,f\pwr{1-t}$ and with the exponents $p\over t$, $q\over{1-t}$, we obtain that
$$
[f\pwr t\ot f\pwr{1-t}]^g_{r,\Rg}\le_g [f\pwr t]^g_{p/t,\Rg}\ot [f\pwr {1-t}]^g_{q/{1-t},\Rg}.
$$
Now, since it can be worked out that
$$
[f\pwr t]^g_{p/t,\Rg}=\Big([f]^g_{p,\Rg}\Big)\pwr t
$$
and
$$
[f\pwr {1-t}]^g_{q/{1-t},\Rg}=\Big([f]^g_{q,\Rg}\Big)\pwr {1-t},
$$
the assertion follows.
\medskip

(b) This is an immediate consequence of the fact that $g$ is decreasing.
\end{proof}

Theorem \ref{t-h1} provides $g$-H\"older's inequality for $1<p<\infty$. Here, we used $g$-Young's ineuality for the same range of $p$ given in Theorem \ref{t-y1}. On the similar lines, by using Theorem \ref{t-y2}, $g$-H\"older's inequality for $p<1,\, p\ne 0$ can be obtained. We only state the theorem below:

\begin{theorem}\label{t-rh}
	Let $0<p<1$. 
	\begin{enumerate}
		\item [\rm (a)] If the generator $g$ is increasing then for all measurable functions $f,h:\R\to \Rg$, the following  inequality holds:
		\begin{equation}\label{e3.17}
			[f\ot h]^g_{1,\Rg}\ge_g [f]^g_{p,\Rg}\ot [h]^g_{p',\Rg}.
		\end{equation}
		
		\item [\rm (b)] If the generator $g$ is decreasing then  the  inequality \eqref{e3.17} holds in the reverse direction.
	\end{enumerate}	
\end{theorem}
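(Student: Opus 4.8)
The plan is to transcribe the proof of Theorem \ref{t-h1} almost verbatim, replacing the single appeal to the forward Young inequality (Theorem \ref{t-y1}(a)) by the reverse Young inequality (Theorem \ref{t-y2}(a)). The point is that although $0<p<1$ forces the conjugate index $p'$ to be negative, the defining relation ${1\over p}+{1\over p'}=1$ is unchanged, so every normalization identity used in the proof of Theorem \ref{t-h1} remains available. First I would set, exactly as in \eqref{e3.10}--\eqref{e3.11},
\begin{equation*}
a=|f(x)|_g \oti [f]^g_{p,\Rg}, \qquad b=|h(x)|_g \oti [h]^g_{p',\Rg},
\end{equation*}
so that, as in \eqref{e3.12},
\begin{equation*}
\int_{\Rg}^g a\ot b \ot dx = \left( \int_{\Rg}^g |f\ot h|_g\ot dx \right)\oti \left( [f]^g_{p,\Rg}\ot [h]^g_{p',\Rg} \right).
\end{equation*}

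Next I would record the two auxiliary computations $\int_{\Rg}^g a\pwr p \oti \gi(p)\ot dx=\gi(g(1_g)/p)$ and $\int_{\Rg}^g b\pwr {p'}\oti \gi(p')\ot dx=\gi(g(1_g)/p')$. Each rests only on the cancellation $\left([f]^g_{p,\Rg}\right)\pwr p=\int_{\Rg}^g|f|_g\pwr p\ot dx$ together with the exponent law $\left(x\pwr{1/p}\right)\pwr p=x$ and the pulling of a constant through the $g$-integral; summing the two and invoking ${1\over p}+{1\over p'}=1$ again produces $1_g$, precisely as in \eqref{e3.13}. The only substantive change is at the pointwise step: applying Theorem \ref{t-y2}(a) to $a,b\in\Rg^+$ gives
\begin{equation*}
a\ot b \ge_g \left( a\pwr p \oti\gi(p) \right) \op \left( b\pwr {p'} \oti\gi(p') \right),
\end{equation*}
and $g$-integrating both sides via the monotonicity property (d) of the $g$-integral reverses the integrated inequality to
\begin{equation*}
\left( \int_{\Rg}^g |f\ot h|_g\ot dx \right)\oti \left( [f]^g_{p,\Rg}\ot [h]^g_{p',\Rg} \right)\ge_g 1_g.
\end{equation*}
Multiplying through by the positive element $[f]^g_{p,\Rg}\ot [h]^g_{p',\Rg}$, which preserves $\ge_g$, and recalling $[f\ot h]^g_{1,\Rg}=\int_{\Rg}^g|f\ot h|_g\ot dx$, delivers \eqref{e3.17}. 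Part (b) is then immediate: for decreasing $g$, Theorem \ref{t-y2}(b) reverses the Young step once more, so the integrated inequality flips and one obtains \eqref{e3.17} in the reverse direction.

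The step I expect to demand the most care is the exponent bookkeeping forced by the negative index $p'$. The laws of exponents (i)--(v) are stated only for positive powers, so before the normalization argument can be closed one must justify $\left(x\pwr{1/p'}\right)\pwr{p'}=x$ and the cancellation giving $\int_{\Rg}^g b\pwr{p'}\ot dx=1_g$ through the defining convention $x\pwr{-p}=1_g\oti x\pwr p$ for $x\in\Rg^+$. A secondary point is the final rearrangement, which is legitimate only when $[f]^g_{p,\Rg}\ot[h]^g_{p',\Rg}>_g 0_g$, i.e. in the nondegenerate case; this is exactly the positivity already implicit in Theorem \ref{t-h1}. Once these two bookkeeping matters are settled, the remainder is a direct transcription of the proof of Theorem \ref{t-h1} with the single Young inequality reversed.
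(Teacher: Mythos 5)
Your proof is correct at the paper's level of rigor, but it is not the argument the paper actually writes down --- interestingly, it is the argument the paper \emph{advertises} in the sentence preceding the statement (``on the similar lines, by using Theorem \ref{t-y2}\dots''). You run the normalization scheme of Theorem \ref{t-h1}: set $a=|f(x)|_g\oti[f]^g_{p,\Rg}$ and $b=|h(x)|_g\oti[h]^g_{p',\Rg}$, apply the reverse Young inequality of Theorem \ref{t-y2}(a) pointwise, $g$-integrate, and use ${1\over p}+{1\over p'}=1$ to collapse the right-hand side to $1_g$ before rearranging. The paper instead transfers the \emph{classical} reverse H\"older inequality through the generator: by Proposition \ref{t-l1} it writes $[f\ot h]^g_{1,\Rg}=\gi\left(\int_\R |g(f(x))|\,|g(h(x))|\,g'(x)\,dx\right)$, splits the weight as $g'=(g')^{1/p}(g')^{1/p'}$, applies the classical reverse H\"older inequality for $0<p<1$ to the resulting real-valued integrals, and then uses monotonicity of $\gi$ to convert the real inequality into $\ge_g$ and identify the right-hand side with $[f]^g_{p,\Rg}\ot [h]^g_{p',\Rg}$. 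The trade-off: your route keeps the whole chain Young $\Rightarrow$ H\"older internal to the $g$-calculus, uniformly in $p$, but it forces exactly the bookkeeping you flag --- the exponent laws (i)--(iv) are stated only for positive powers, so the cancellations $\left(x\pwr{1/p'}\right)\pwr{p'}=x$ and $\int_{\Rg}^g b\pwr{p'}\ot dx=1_g$ must be rechecked via the convention $x\pwr{-q}=1_g\oti x\pwr{q}$, which in turn requires $|h(x)|_g\ne 0_g$ a.e.; the paper's route is much shorter, outsourcing all the analysis to the known classical inequality, at the price of invoking that inequality and Proposition \ref{t-l1} as external inputs. Both arguments share the same implicit nondegeneracy assumptions ($0_g<_g[f]^g_{p,\Rg}$, $0_g<_g[h]^g_{p',\Rg}$, both finite) that you make explicit in your final rearrangement step, so neither is more general than the other in substance.
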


\begin{remark}
Theorem \ref{t-h1} is a generalization of $g$-H\"older's inequality proved in \cite{agahi}  for the case $1<p<\infty$. There the authors used the integral $\displaystyle\int^{\op}_{[a,b]}$ as defined in \eqref{e2.1} which is a special case of the integral $\displaystyle\int^g_{[a,b]}$ used in this paper. Moreover our proof is based on $g$-Young's inequality and we have covered the case $p<1,\,p\ne 0$ as well. The other variants of $g$-H\"older's inequality that we prove in this subsection are also new.
\end{remark}

\subsection{Minkowski's inequality}

\begin{theorem}\label{t-m1}
	Let $1<p<\infty$. 
	\begin{enumerate}
		\item [\rm (a)] If the generator $g$ is increasing then for all measurable functions $f,h:\R\to \Rg$, the following Minkowski's type inequality holds:
		\begin{equation}\label{e-m1}
			[f\op h]^g_{p,\Rg}\le_g [f]^g_{p,\Rg}\op [h]^g_{p,\Rg}.
		\end{equation}
		
		\item [\rm (b)] If the generator $g$ is decreasing then  the  inequality \eqref{e-m1} holds in the reverse direction.
	\end{enumerate}	
\end{theorem}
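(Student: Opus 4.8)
The plan is to imitate the classical derivation of Minkowski's inequality from Hölder's inequality, carrying every step through the pseudo-operations, and then to invoke Theorem \ref{t-h1} ($g$-Hölder) at exactly the point where the classical proof uses Hölder. Throughout I treat part (a) with $g$ increasing; part (b) will follow because every monotone-dependent inequality below reverses when $\gi$ is decreasing, precisely as in Theorems \ref{t-y1} and \ref{t-h1}.

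First I would record three algebraic facts in $\Rg$ that stand in for the pointwise manipulations of the classical argument: (1) $\ot$ distributes over $\op$, i.e. $x\ot(y\op z)=(x\ot y)\op(x\ot z)$, which is immediate from $g\big(x\ot(y\op z)\big)=g(x)\big(g(y)+g(z)\big)$; (2) the $g$-triangle inequality $|f\op h|_g\le_g|f|_g\op|h|_g$ from the Absolute value subsection; and (3) the exponent identities $(p-1)p'=p$, $\big(x\pwr{p-1}\big)\pwr{p'}=x\pwr{p}$, and $x\pwr{p}\oti x\pwr{p-1}=x\pwr{1}=x$, which follow from laws (i)--(ii) and the definition of $\oti$.

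Then the core computation runs as follows. Writing $|f\op h|_g\pwr p=|f\op h|_g\pwr{p-1}\ot|f\op h|_g$ and applying facts (1)--(2), I obtain
$$
|f\op h|_g\pwr p\le_g\Big(|f\op h|_g\pwr{p-1}\ot|f|_g\Big)\op\Big(|f\op h|_g\pwr{p-1}\ot|h|_g\Big).
$$
Taking the $g$-integral and using additivity (property (a)) and monotonicity (property (d)) of $\int^g$, I bound $\big([f\op h]^g_{p,\Rg}\big)\pwr p$ by the $\op$-sum of two $g$-integrals. To each I apply Theorem \ref{t-h1} with exponents $p$ and $p'$; for the first term,
$$
\int_{\Rg}^g|f\op h|_g\pwr{p-1}\ot|f|_g\ot dx\le_g\big[\,|f\op h|_g\pwr{p-1}\big]^g_{p',\Rg}\ot[f]^g_{p,\Rg},
$$
and by fact (3) the first factor equals $\big([f\op h]^g_{p,\Rg}\big)\pwr{p-1}$. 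Doing the same for the second term and factoring out the common $\op$-summand by distributivity yields
$$
\big([f\op h]^g_{p,\Rg}\big)\pwr p\le_g\big([f\op h]^g_{p,\Rg}\big)\pwr{p-1}\ot\Big([f]^g_{p,\Rg}\op[h]^g_{p,\Rg}\Big).
$$

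Finally I would $g$-divide both sides by $\big([f\op h]^g_{p,\Rg}\big)\pwr{p-1}$ (apply $\oti$), which by fact (3) collapses the left side to $[f\op h]^g_{p,\Rg}$ and gives \eqref{e-m1}. The main obstacle is exactly this cancellation: it is legitimate only when $[f\op h]^g_{p,\Rg}$ is a strictly positive, $g$-finite element, and one must check that $g$-division by a fixed positive element preserves $\le_g$ (true since $\ot$ is nondecreasing, so $g(\cdot)g(c)$ preserves order for $g(c)>0$); the degenerate cases where $[f\op h]^g_{p,\Rg}=0_g$ or is not $g$-finite must be dispatched separately, as in the classical proof. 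As a cross-check, and a cleaner alternative route, one may instead use Proposition \ref{t-l1}: since $g\circ(f\op h)=g\circ f+g\circ h$, applying $g$ shows that \eqref{e-m1} is equivalent to the ordinary weighted Minkowski inequality $[g\circ f+g\circ h]_{p,g,\R}\le[g\circ f]_{p,g,\R}+[g\circ h]_{p,g,\R}$ with weight $g'>0$, which holds classically; this sidesteps the cancellation obstacle entirely.
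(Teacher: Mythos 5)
Your proof follows essentially the same route as the paper's: the decomposition $|f\op h|_g\pwr p=|f\op h|_g\pwr{p-1}\ot|f\op h|_g$, the $g$-triangle inequality, $g$-H\"older applied to each of the two resulting integrals, and cancellation of the common factor $\left([f\op h]^g_{p,\Rg}\right)\pwr{p-1}$ --- a step the paper compresses into ``adjusting the powers.'' Your explicit justification of that cancellation (positivity, order-preservation under $\oti$, and the degenerate cases), as well as the alternative reduction to the classical weighted Minkowski inequality via Proposition \ref{t-l1}, are refinements the paper does not spell out, but the core argument is identical.
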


\begin{proof}(a)
	Since $g$ is increasing, we have
\begin{equation}\label{e-m2}
|f\op h|_g\le_g |f|_g \op |h|_g
\end{equation}
and consequently
\begin{align}\label{e-m3}
\int_{\Rg}^g |f\op h|_g\pwr p\ot dx	&=\int_{\Rg}^g |f\op h|_g\pwr {p-1} \ot |f\op h|_g\ot dx\nonumber\\
&\le_g \int_{\Rg}^g |f\op h|_g\pwr {p-1} \ot |f|_g\ot dx \op \int_{\Rg}^g |f\op h|_g\pwr {p-1} \ot |h|_g\ot dx\nonumber\\
&=: I_1\op I_2.
\end{align}
We apply H\"older's inequality \eqref{e3.9} and obtain
\begin{align*}
I_1 &\le_g 	[(f\op h)\pwr {p-1}]^g_{p',\Rg}\ot [f]^g_{p,\Rg}\\
&=\left([f\op h]^g_{p,\Rg}\right)^{(p)\oti (p')}\ot [f]^g_{p,\Rg}.
\end{align*}
Similarly
\begin{align*}
	I_2 
	&\le_g\left([f\op h]^g_{p,\Rg}\right)^{(p)\oti (p')}\ot [h]^g_{p,\Rg}
\end{align*}	
so that \eqref{e-m3} gives
$$
\int_{\Rg}^g |f\op h|_g\pwr p\ot dx\le_g \left([f]^g_{p,\Rg} \op [h]^g_{p,\Rg} \right) \ot \left([f\op h]^g_{p,\Rg}\right)^{(p)\oti (p')}.
$$
Now, adjusting the powers of the factor $[f\op h]^g_{p,\Rg}$, the assertion follows.
\medskip

(b) This follows immediately since the inequality \eqref{e-m2} and H\"older's inequality both hold in the reverse direction in this case.
\end{proof}

The case of the Minkowski inequality when $p<1,\, p\ne 0$ can be discussed similarly. We only state the result below:

\begin{theorem}\label{t-m2}
	Let $0<p<1,\, p\ne 0$. 
	\begin{enumerate}
		\item [\rm (a)] If the generator $g$ is increasing then for all measurable functions $f:\R\to \Rg$, the following Minkowski's type inequality holds:
		\begin{equation}\label{e-m4}
			[f\op h]^g_{p,\Rg}\ge_g [f]^g_{p,\Rg}\op [h]^g_{p,\Rg}.
		\end{equation}
		
		\item [\rm (b)] If the generator $g$ is decreasing then  the  inequality \eqref{e-m4} holds in the reverse direction.
	\end{enumerate}	
\end{theorem}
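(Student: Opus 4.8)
The plan is to imitate the proof of Theorem \ref{t-m1} almost verbatim, but to feed in the \emph{reverse} H\"older inequality \eqref{e3.17} of Theorem \ref{t-rh} in place of \eqref{e3.9}. First I would record the pointwise factorisation $|f\op h|_g\pwr p=|f\op h|_g\pwr{p-1}\ot|f\op h|_g$ and split the $g$-integral of the right-hand side, via property (a) of the $g$-integral, into a sum $I_1\op I_2$ with $I_1=\int_{\Rg}^g|f\op h|_g\pwr{p-1}\ot|f|_g\ot dx$ and $I_2$ the analogous expression carrying $|h|_g$. To each of $I_1,I_2$ I would apply \eqref{e3.17} with first exponent $p\in(0,1)$ (attached to $f$, respectively $h$) and conjugate exponent $p'=p/(p-1)<0$ (attached to the factor $|f\op h|_g\pwr{p-1}$); since $(p-1)p'=p$, the $p'$-bracket of that factor is exactly $\big([f\op h]^g_{p,\Rg}\big)\pwr{p/p'}$. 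Because \eqref{e3.17} runs in the $\ge_g$ direction, this yields $I_1\op I_2\ge_g\big([f]^g_{p,\Rg}\op[h]^g_{p,\Rg}\big)\ot\big([f\op h]^g_{p,\Rg}\big)\pwr{p/p'}$.

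Combining this with the integral identity and then cancelling the common factor $\big([f\op h]^g_{p,\Rg}\big)\pwr{p/p'}$ by $\oti$ (an order-preserving operation when $g$ is increasing) finishes part (a), once one checks the exponent bookkeeping $p-p/p'=1$, which is merely a restatement of $1-1/p'=1/p$. Part (b) then follows exactly as in Theorem \ref{t-m1}(b): a decreasing $g$ reverses each of the two ingredients, so \eqref{e-m4} flips accordingly.

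The step that genuinely needs care, and the one place where the argument is \emph{not} a mechanical copy of the $1<p$ case, is the decomposition into $I_1\op I_2$. In Theorem \ref{t-m1} this step is an inequality supplied by the triangle inequality $|f\op h|_g\le_g|f|_g\op|h|_g$, and it conveniently points in the same $\le_g$ direction as H\"older, so the estimates chain. For $0<p<1$ the H\"older step has reversed to $\ge_g$ while the triangle inequality has \emph{not}, so the two estimates would point in opposite directions and refuse to concatenate. The resolution is the classical one: one works with $f,h\in\Rg^+$, where $|f\op h|_g=f\op h=|f|_g\op|h|_g$, so that the split becomes an exact \emph{equality} and only the reversed H\"older step dictates the direction. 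I expect this to be the main obstacle, and the statement should be read with this nonnegativity hypothesis in force, precisely as reverse Minkowski in the classical theory is restricted to nonnegative functions.
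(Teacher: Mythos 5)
Your proof is correct, and it is worth noting that the paper offers no proof of this theorem at all: it merely remarks that the case $p<1$ ``can be discussed similarly'' to Theorem \ref{t-m1} and records the statement. Your argument supplies exactly the discussion that remark gestures at --- split $|f\op h|_g\pwr p=|f\op h|_g\pwr{p-1}\ot|f\op h|_g$, estimate $I_1$ and $I_2$ by the reverse H\"older inequality \eqref{e3.17} with exponents $p$ and $p'=p/(p-1)<0$, and cancel the common factor $\big([f\op h]^g_{p,\Rg}\big)\pwr{p/p'}$ using $p-p/p'=1$. More importantly, the obstacle you isolate is real and is silently elided by the paper: for increasing $g$ the pseudo-triangle inequality $|f\op h|_g\le_g|f|_g\op|h|_g$ still points in the $\le_g$ direction while H\"older has reversed, so a verbatim copy of the proof of Theorem \ref{t-m1} produces two estimates that do not concatenate. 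Indeed, the statement as printed (for arbitrary measurable $f,h$) is false: taking $h$ to be the pseudo-negative of $f$, i.e.\ $h=\gi(-g\circ f)$ with $g\circ f$ not identically zero, gives $f\op h=0_g$, hence $[f\op h]^g_{p,\Rg}=0_g$, while $[f]^g_{p,\Rg}\op[h]^g_{p,\Rg}>_g0_g$, violating \eqref{e-m4} --- the pseudo-analogue of the classical counterexample to reverse Minkowski for signed functions. So your restriction to $f,h\in\Rg^+$ (equivalently $g\circ f,\,g\circ h\ge0$), which turns the splitting step into an exact equality and lets the reversed H\"older step alone dictate the direction, is not a cosmetic hypothesis but a needed correction to the theorem's statement; with it, your proof is complete modulo the same tacit finiteness and non-vanishing assumptions that the paper already makes in Theorem \ref{t-m1}.
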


\section{Hermite-Hadamard Inequality}

In this section, we shall consider the special case of the integral $\displaystyle\int^g_{[a,b]} f(x)\ot dx$, i.e., $\displaystyle\int^\op_{[a,b]} f(x)\ot dx$ given by \eqref{e2.1}.

The classical Hermite-Hadamard inequality asserts that if $f:[a,b]\to \R$ is a convex function then the following holds:
$$
f\left({a+b\over 2}\right) \le { 1\over{b-a}} \int_a^b f(x)\, dx \le \frac{f(a)+f(b)}{2}
$$
and the inequalities are reversed if $f$ is concave. The aim of this section is to derive a $g$-analogue of this inequality. First, we define the following:

\begin{definition}
	A function $f:[a,b]\to \Rg$ is said to be pseudo-convex on $[a,b]$ if for all $x,y\in[a,b]$ and all $0\le\lambda\le 1$
	$$
	f(\l x + (1-\l)y)\le_g \l\od f(x)\op (1-\l)\od f(y).
	$$
\end{definition}

We shall denote
$$
\sigma :=\l b + (1-\l) a\quad {\rm and}\quad \delta:=\l a + (1-\l)b.
$$

\begin{theorem}\label{t-hh1}
Let $f:[a,b]\to \Rg$ be a pseudo-convex function. Then the following Hermite-Hadamard inequality holds:
\begin{equation}\label{e-hh1}
	f\left({a+b\over 2}\right) \le_g \left( 1\over{b-a}\right)\od \int^\op_{[a,b]} f(x)\ot dx \le_g {1\over 2}\od \big(f(a)\op f(b)\big).
\end{equation}	
	
\end{theorem}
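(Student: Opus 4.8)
The plan is to reduce everything to the classical Hermite-Hadamard inequality applied to the ordinary real-valued function $F:=g\circ f$ on $[a,b]$. The first thing I would record is the translation rule for the pseudo-order. From $u\le_g v\iff u\om v\le 0_g$, together with $u\om v=\gi(g(u)-g(v))$ and $g(0_g)=0$, one checks that $u\le_g v$ is equivalent to $g(u)\le g(v)$ when $g$ (hence $\gi$) is increasing, and to $g(u)\ge g(v)$ when $g$ is decreasing. Equivalently, $\le_g$ is just the usual order on $\R$, the two sign reversals cancelling in the decreasing case.

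Next I would turn pseudo-convexity of $f$ into ordinary (con/)convexity of $F$. Using $\l\od f(x)\op(1-\l)\od f(y)=\gi\big(\l g(f(x))+(1-\l)g(f(y))\big)$ and applying the rule above to the defining inequality $f(\l x+(1-\l)y)\le_g \l\od f(x)\op(1-\l)\od f(y)$, I obtain $F(\l x+(1-\l)y)\le \l F(x)+(1-\l)F(y)$ when $g$ is increasing, so that $F$ is convex, and the reversed inequality when $g$ is decreasing, so that $F$ is concave.

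The core step is then purely classical. For increasing $g$, $F$ is convex, so $F(\tfrac{a+b}2)\le \tfrac1{b-a}\int_a^b F\le \tfrac{F(a)+F(b)}2$; for decreasing $g$, $F$ is concave and all three inequalities reverse. In either case I apply $\gi$ throughout the chain, which preserves the usual order for increasing $g$ and reverses it for decreasing $g$; combined with the concavity flip, both cases yield the same $g$-chain, which is why no case split appears in the statement. It then remains to identify the three images: $\gi\big(F(\tfrac{a+b}2)\big)=f(\tfrac{a+b}2)$; by the definitions of $\od$ and of $\int^\op$, $\gi\big(\tfrac1{b-a}\int_a^b g(f(x))\,dx\big)=\big(\tfrac1{b-a}\big)\od\int^\op_{[a,b]} f(x)\ot dx$; and by unwinding $\op$ and $\od$, $\gi\big(\tfrac12(g(f(a))+g(f(b)))\big)=\tfrac12\od(f(a)\op f(b))$. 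This is exactly \eqref{e-hh1}.

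I do not expect a genuine obstacle here, only careful bookkeeping of directions: one must confirm that for decreasing $g$ the reversal of convexity and the reversal induced by the decreasing $\gi$ cancel precisely. An alternative route, closer to the notation $\sigma,\delta$ above and transparently free of any case distinction, avoids citing the classical result: for the left bound, apply pseudo-convexity at parameter $\tfrac12$ to the points $\sigma,\delta$ (note $\tfrac12\sigma+\tfrac12\delta=\tfrac{a+b}2$), giving $f(\tfrac{a+b}2)\le_g\tfrac12\od f(\sigma)\op\tfrac12\od f(\delta)$, and then $g$-integrate in $\l$ over $[0,1]$ using properties (c) and (d); for the right bound, $g$-integrate $f(\delta)\le_g \l\od f(a)\op(1-\l)\od f(b)$ over $\l\in[0,1]$, the linear substitution $x=\l a+(1-\l)b$ turning the left side into the $g$-mean $\big(\tfrac1{b-a}\big)\od\int^\op_{[a,b]}f(x)\ot dx$ and $\int_0^1\l\,d\l=\int_0^1(1-\l)\,d\l=\tfrac12$ producing the right side. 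The one delicate point there is justifying the change of variables inside the definition $\int^\op_{[a,b]}f(x)\ot dx=\gi\big(\int_a^b g(f(x))\,dx\big)$, but since the substitution is linear its Jacobian is constant and the reduction is routine.
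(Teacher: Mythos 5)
Your proposal is correct, but your main argument is genuinely different from the paper's proof. The paper never invokes the classical Hermite--Hadamard inequality and never splits into cases on the monotonicity of $g$: it applies pseudo-convexity at parameter $\tfrac12$ to the pair $\sigma=\l b+(1-\l)a$, $\delta=\l a+(1-\l)b$ (whose midpoint is $\tfrac{a+b}{2}$), then again to $f(\sigma)$ and $f(\delta)$ separately, obtaining the pointwise-in-$\l$ chain
\begin{equation*}
f\left({a+b\over 2}\right)\le_g {1\over 2}\od\big(f(\sigma)\op f(\delta)\big)\le_g {1\over 2}\od\big(f(a)\op f(b)\big),
\end{equation*}
and finally $g$-integrates this chain over $\l\in[0,1]$, the linear substitution $x=\l b+(1-\l)a$ identifying the integral of the middle term with $\left({1\over b-a}\right)\od\int^\op_{[a,b]}f(x)\ot dx$. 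Your primary route instead conjugates the whole problem by the generator: you observe that $\le_g$ coincides with the usual order on $\R$, that pseudo-convexity of $f$ is ordinary convexity (resp.\ concavity) of $F=g\circ f$ when $g$ is increasing (resp.\ decreasing), and you pull the classical inequality for $F$ back through $\gi$, the two order reversals cancelling in the decreasing case; all the identifications you list are correct. What your transfer argument buys is brevity, a structural explanation of why this theorem -- unlike every other theorem in the paper -- carries no hypothesis on the monotonicity of $g$, and it would also yield the refinement in Theorem \ref{t-hh5} directly from the classical refinement of \cite{far}. What the paper's intrinsic route buys is independence from the classical result: it works entirely inside $\Rg$ with the integral's order-preservation property, a template that would survive in settings where the pseudo-order cannot be identified with the usual one. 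Note finally that the ``alternative route'' you sketch in your last paragraph is essentially the paper's proof (same midpoint trick on $\sigma,\delta$, same integration in $\l$, same substitution), differing only in that you integrate $f(\delta)\le_g\l\od f(a)\op(1-\l)\od f(b)$ directly to get the upper bound rather than passing through the middle term $\tfrac12\od(f(\sigma)\op f(\delta))$.
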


\begin{proof}
Since $f$ is pseudo-convex, we have
\begin{align*}
	f\left(\sigma + \delta \over 2\right) &\le_g {1\over 2}\od (f(\sigma) \op f(\delta))\\
	&\le_g {1\over 2}\od \Big( \l\od f(b)\op (1-\l)\od f(a)\op \l\od f(a)\op (1-\l)\od f(b)\Big)\\
	&= {1\over 2}\od \big(f(a)\op f(b)\big).
\end{align*}
Thus since $\sigma + \delta = a+b$, it follows that
\begin{equation}\label{e-hh2}
f\left({a+b\over 2}\right) \le_g {1\over 2}\od (f(\sigma) \op f(\delta)) \le_g 	{1\over 2}\od \big(f(a)\op f(b)\big).
\end{equation}
Now, making variable substitution $\l b + (1-\l) a = x$, we find that
\begin{align*}
\int^\op_{[0,1]} f(\sigma)\ot d\l&=	\int^\op_{[0,1]} f(\l b + (1-\l) a)\ot d\l\\
&= \gi\left( \int_0^1 (g\circ f)(\l b + (1-\l) a) \,d\l    \right)\\
&=\gi\left({1\over{b-a}} \int_a^b (g\circ f)(x) \,dx\right)\\
&=\gi\left({1\over{b-a}}g\left(\gi\left( \int_a^b (g\circ f)(x) \,dx\right)\right)\right)\\
&={1\over{b-a}}\od \gi\left( \int_a^b (g\circ f)(x) \,dx\right)\\
&={1\over{b-a}}\od \int^\op_{[a,b]} f(x)\ot dx.
\end{align*}
Consequently, taking $g$-integral throughout \eqref{e-hh2} with respect to $\l$ over $[0,1]$, the inequality \eqref{e-hh1} follows.	
\end{proof}

\begin{corollary}
Let $u,v >_g 0_g,\, u\ne v$ and $g$ be increasing. Then the following inequalities hold:
\begin{equation}\label{e-cor1}
(u\ot v)\pwr{1/2}	\le_g \frac{1}{g(\Ln u)-g(\Ln v)}\od (u\om v) \le_g {1\over 2}\od (u\op v).
\end{equation}
\end{corollary}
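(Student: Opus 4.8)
The plan is to deduce \eqref{e-cor1} from the $g$-Hermite--Hadamard inequality \eqref{e-hh1} by applying it to a single well-chosen pseudo-convex function on an appropriate \emph{real} interval, mirroring the classical route to the geometric--logarithmic--arithmetic inequality (which is Hermite--Hadamard applied to $e^{x}$ on $[\ln a,\ln b]$). Since $\Ln u=\gi(\ln g(u))$ gives $g(\Ln u)=\ln g(u)$, and likewise $g(\Ln v)=\ln g(v)$, the real numbers $\alpha:=\ln g(v)$ and $\beta:=\ln g(u)$ are precisely the quantities in the denominator, $g(\Ln u)-g(\Ln v)=\beta-\alpha$. Because the two outer expressions in \eqref{e-cor1} are symmetric in $u,v$ while the middle one is unchanged under the swap $u\leftrightarrow v$ (its numerator $u\om v$ and its scalar denominator both change sign), I may assume without loss of generality that $u>_g v$; as $g$ is increasing with $u,v>_g 0_g$ this gives $g(u)>g(v)>0$, so $\alpha<\beta$ and $[\alpha,\beta]$ is a genuine interval.

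First I would introduce $f:[\alpha,\beta]\to\Rg$, $f(x)=\gi(e^{x})$, i.e. the ordinary exponential lifted through $\gi$, and check that it is pseudo-convex. Since $g$ is increasing, the defining inequality $f(\lambda x+(1-\lambda)y)\le_g \lambda\od f(x)\op(1-\lambda)\od f(y)$ is equivalent, after applying $g$ to both sides, to $e^{\lambda x+(1-\lambda)y}\le \lambda e^{x}+(1-\lambda)e^{y}$, which is nothing but convexity of the standard exponential. Hence Theorem \ref{t-hh1} applies to $f$ on $[\alpha,\beta]$.

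It then remains to evaluate the three terms of \eqref{e-hh1}. At the endpoints, $f(\alpha)=\gi(e^{\ln g(v)})=\gi(g(v))=v$ and $f(\beta)=u$, so the right-hand term is ${1\over 2}\od(f(\alpha)\op f(\beta))={1\over 2}\od(u\op v)$. At the midpoint, $f\big({\alpha+\beta\over2}\big)=\gi\big(e^{(\ln g(u)+\ln g(v))/2}\big)=\gi\big((g(u)g(v))^{1/2}\big)=(u\ot v)\pwr{1/2}$, using $g(u\ot v)=g(u)g(v)$. For the middle term, the special integral \eqref{e2.1} gives $\int^\op_{[\alpha,\beta]}f(x)\ot dx=\gi\big(\int_\alpha^\beta e^{x}\,dx\big)=\gi(g(u)-g(v))=u\om v$, so that $\big({1\over\beta-\alpha}\big)\od(u\om v)={1\over g(\Ln u)-g(\Ln v)}\od(u\om v)$. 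Substituting these three evaluations into \eqref{e-hh1} yields \eqref{e-cor1} immediately.

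The conceptual heart of the argument, and its only nonroutine step, is spotting the correct substitution: lifting $e^{x}$ through $\gi$ and integrating over the real interval $[\ln g(v),\ln g(u)]$ rather than over any interval in $\Rg$. Once $f$ and $[\alpha,\beta]$ are fixed, each identity above is a direct computation from the definitions of $\ot,\om,\od,\pwr{\cdot}$ and $\Ln$, so no real difficulty remains; the only points requiring care are the strict positivity $g(u),g(v)>0$ (guaranteeing $\alpha,\beta$ are well defined, and using $u,v>_g 0_g$) and the hypothesis $u\ne v$ (keeping $\beta-\alpha\ne0$).
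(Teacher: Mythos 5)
Your proposal is correct and is essentially the paper's own argument: your function $f(x)=\gi(e^{x})$ is exactly the paper's $E^{\gi(x)}$, and your choice of interval $[\ln g(v),\ln g(u)]$ is precisely the paper's substitution $u=E^{\gi(a)}$, $v=E^{\gi(b)}$, i.e.\ $a=g(\Ln u)$, $b=g(\Ln v)$. You merely carry out explicitly the verifications the paper compresses into ``clearly pseudo-convex'' and ``after some calculations,'' and you add the (worthwhile) remark that symmetry of all three expressions in $u,v$ lets one assume $u>_g v$ so the interval is nondegenerate.
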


\begin{proof}
Clearly, for an increasing function $g$, $f(x)=E^{\gi(x)}$ is pseudo-convex. After some calculations, it can be worked out that by taking $f(x)=E^{\gi(x)}$ in Theorem \ref{t-hh1}, the inequalities \eqref{e-hh1} become
\begin{equation}\label{e-cor2}
\left(E^{\gi(a)}\ot E^{\gi(b)}  \right)	\pwr{1/2} \le_g \frac{1}{b-a}\od \left(E^{\gi(b)}\om E^{\gi(a)}  \right)\le_g \frac{1}{2}\od \left(E^{\gi(a)}\op E^{\gi(b)}  \right) .
\end{equation}
Now, put 
$$
E^{\gi(a)} = u\quad {\rm and}\quad E^{\gi(b)} = v
$$
so that
$$
a=g(\Ln u)\quad {\rm and}\quad b=g(\Ln v).
$$
The inequalities \eqref{e-cor1} now follow with these transformations.
\end{proof}

\begin{remark}
The inequalities \eqref{e-cor1} are the $g$-analogue of the standard geometric-logarithmic-arithmatic mean inequality
$$
\sqrt{uv}\le \frac{u-v}{\ln u-\ln v}\le \frac{u+v}{2},\quad u,v>0,\, u\ne v
$$
which can be obtained by taking the generator $g$ as the identity function in \eqref{e-cor1}
\end{remark}

A refinement of the Hermite-Hadamard inequality has recently been given in (\cite{far}, Theorem 1.1), We prove below its $g$-analogue which is a refinement of the inequality \eqref{e-hh1}.

\begin{theorem}\label{t-hh5}
	Let $f:[a,b]\to \Rg$ be a pseudo-convex function. Then for all $\l\in[0,1]$, the following inequalities hold:
	\begin{equation}\label{e-hh3}
	f\left({a+b\over 2}\right)\le_g \ell(\l) \le_g \left( 1\over{b-a}\right)\od \int^\op_{[a,b]} f(x)\ot dx\le_g L(\l) \le_g {1\over 2}\od \big(f(a)\op f(b)\big),
	\end{equation}
	where
	$$
	\ell(\l):=\l\od f\left({\l b + (2-\l) a\over 2}  \right) \op (1-\l)\od f\left({1+\l b + (1-\l) a\over 2}  \right)
	$$
	and
	$$
	L(\l):={1\over 2}\od\Big( f(\l b + (1-\l)a) \op \l\od f(a) \op (1-\l)\od f(b)\Big).
	$$
\end{theorem}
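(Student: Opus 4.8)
The plan is to transcribe El Farissi's classical argument into the pseudo-setting, using Theorem \ref{t-hh1} as the engine on two sub-intervals. Everything hinges on splitting $[a,b]$ at the ordinary point $c:=\l b+(1-\l)a$, for which $c-a=\l(b-a)$ and $b-c=(1-\l)(b-a)$. A direct check shows that the two arguments occurring in $\ell(\l)$ are exactly the midpoints $\tfrac{a+c}{2}=\tfrac{\l b+(2-\l)a}{2}$ and $\tfrac{c+b}{2}=\tfrac{(1+\l)b+(1-\l)a}{2}$ of $[a,c]$ and $[c,b]$, and that $\tfrac{a+b}{2}=\l\,\tfrac{a+c}{2}+(1-\l)\,\tfrac{c+b}{2}$ as an ordinary convex combination. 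Before starting I would record the algebraic identities used repeatedly: $\mu\od(\nu\od X)=(\mu\nu)\od X$, $\mu\od X\op\mu\od Y=\mu\od(X\op Y)$, $\mu\od X\op\nu\od X=(\mu+\nu)\od X$ and $1\od X=X$, together with the interval-additivity $\int^\op_{[a,b]}f\ot dx=\int^\op_{[a,c]}f\ot dx\op\int^\op_{[c,b]}f\ot dx$, which is immediate from $\gi(\int_a^b)=\gi(\int_a^c+\int_c^b)$ and the definition of $\op$. Each follows at once from $\mu\od X=\gi(\mu g(X))$ and $X\op Y=\gi(g(X)+g(Y))$. I also note that the restriction of a pseudo-convex $f$ to any sub-interval is again pseudo-convex, so Theorem \ref{t-hh1} applies verbatim on $[a,c]$ and on $[c,b]$.

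For the two outer inequalities I would invoke pseudo-convexity directly. The leftmost bound $f(\tfrac{a+b}{2})\le_g\ell(\l)$ comes from applying the definition of pseudo-convexity to the combination $\tfrac{a+b}{2}=\l\,\tfrac{a+c}{2}+(1-\l)\,\tfrac{c+b}{2}$. The rightmost bound $L(\l)\le_g\tfrac12\od(f(a)\op f(b))$ comes from bounding the single term $f(c)=f(\l b+(1-\l)a)\le_g\l\od f(b)\op(1-\l)\od f(a)$ by pseudo-convexity, substituting into $L(\l)$, and collecting the $f(a)$- and $f(b)$-contributions via $\mu\od X\op\nu\od X=(\mu+\nu)\od X$ and $1\od X=X$.

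For the two inner inequalities I would run Theorem \ref{t-hh1} on each half. The left half of \eqref{e-hh1} on $[a,c]$ and on $[c,b]$ gives $f(\tfrac{a+c}{2})\le_g\tfrac{1}{c-a}\od\int^\op_{[a,c]}f$ and $f(\tfrac{c+b}{2})\le_g\tfrac{1}{b-c}\od\int^\op_{[c,b]}f$; applying the order-preserving maps $\l\od(\cdot)$ and $(1-\l)\od(\cdot)$, using $\tfrac{\l}{c-a}=\tfrac{1}{b-a}$ and $\tfrac{1-\l}{b-c}=\tfrac{1}{b-a}$, then $\op$-adding and invoking distributivity and interval-additivity, yields $\ell(\l)\le_g\tfrac{1}{b-a}\od\int^\op_{[a,b]}f$. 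Symmetrically, the right half of \eqref{e-hh1} on the two sub-intervals gives $\int^\op_{[a,c]}f\le_g\tfrac{c-a}{2}\od(f(a)\op f(c))$ and $\int^\op_{[c,b]}f\le_g\tfrac{b-c}{2}\od(f(c)\op f(b))$ after clearing the scalars; $\op$-adding, applying $\tfrac{1}{b-a}\od(\cdot)$, and merging the two $f(c)$-contributions into $\tfrac12\od f(c)$ produces exactly $L(\l)$. Chaining the four inequalities gives \eqref{e-hh3}.

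I expect no deep obstacle: the content is entirely in the bookkeeping of the pseudo-operations and in pushing each inequality through order-preserving maps. The step most prone to error—and the one I would write out most carefully—is the right inner inequality, where one must (i) clear the scalars $\tfrac{1}{c-a}$, $\tfrac{1}{b-c}$ in the correct direction (legitimate because $(c-a)\od(\cdot)$ is the $\le_g$-preserving inverse of $\tfrac{1}{c-a}\od(\cdot)$), and (ii) recombine $\tfrac{\l}{2}\od f(c)\op\tfrac{1-\l}{2}\od f(c)=\tfrac12\od f(c)$ so that the output matches $L(\l)$ verbatim. The one conceptual point worth flagging is that this proof is uniform in the monotonicity of $g$: since $\le_g$ and pseudo-convexity transform consistently—pushing everything through $g$ turns the chain into the classical El Farissi chain for the convex, respectively concave, function $g\circ f$—no separate increasing/decreasing cases are needed, exactly as in Theorem \ref{t-hh1}.
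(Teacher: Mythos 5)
Your proposal is correct and follows essentially the same route as the paper's own proof: split $[a,b]$ at $\sigma=\l b+(1-\l)a$, apply Theorem \ref{t-hh1} on $[a,\sigma]$ and $[\sigma,b]$, combine the two chains with the weights $\l$ and $1-\l$ via $\od$ and $\op$ (using $\sigma-a=\l(b-a)$, $b-\sigma=(1-\l)(b-a)$ and interval-additivity of the pseudo-integral), and obtain the two outer bounds directly from pseudo-convexity applied to $\frac{a+b}{2}=\l\,\frac{a+\sigma}{2}+(1-\l)\,\frac{\sigma+b}{2}$ and to $f(\sigma)$ inside $L(\l)$. The only cosmetic difference is bookkeeping (you clear the scalars $\frac{1}{\sigma-a}$, $\frac{1}{b-\sigma}$ before $\op$-adding, the paper after), which changes nothing of substance.
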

\begin{proof}
Recall $\sigma=\l b + (1-\l)a$. Clearly $a<\sigma<b$. We apply the inequality \eqref{e-hh1} on the interval $[a,\sigma]$, with $\l\ne 0$ and get
\begin{equation}\label{e-hh4}
f\left({a+\sigma\over 2}\right) \le_g \left( 1\over{\sigma-a}\right)\od \int^\op_{[a,\sigma]} f(x)\ot dx \le_g {1\over 2}\od \big(f(a)\op f(\sigma)\big).	
\end{equation}
Similarly applying again \eqref{e-hh1} on the interval $[\sigma,b]$ with $\l\ne 1$, we get
\begin{equation}\label{e-hh5}
	f\left({\sigma+b\over 2}\right) \le_g \left( 1\over{b-\sigma}\right)\od \int^\op_{[\sigma,b]} f(x)\ot dx \le_g {1\over 2}\od \big(f(\sigma)\op f(b)\big).
\end{equation}

Clearly
\begin{equation}\label{e-hh6}
\l\od f\left({a+\sigma\over 2}\right) \op (1-\l)\od f\left({\sigma+b\over 2}\right) = \ell(\l).
\end{equation}
Also, we find that
\begin{align}
&\l\od \left( 1\over{\sigma-a}\right)\od \int^\op_{[a,\sigma]} f(x)\ot dx \op (1-\l)\od \left( 1\over{b-\sigma}\right)\od \int^\op_{[\sigma,b]} f(x)\ot dx\nonumber\\
&\quad = {1\over{b-a}}\od \left(\int^\op_{[a,\sigma]} f(x)\ot dx \op  \int^\op_{[\sigma,b]} f(x)\ot dx   \right)\nonumber\\
&\quad = {1\over{b-a}}\od \int^\op_{[a,b]} f(x)\ot dx\label{e-hh7}
\end{align}
and
\begin{align}
&\l\od{1\over 2}\od (f(a)\op f(\sigma)) \op 	(1-\l)\od{1\over 2}\od ( f(\sigma)\op f(b)\nonumber\\
&\quad ={1\over 2}\od \Big(\l\od f(a)\op f(\sigma) \op (1-\l)\od f(b)  \Big)\nonumber\\
&\quad =L(\l).\label{e-hh8}
\end{align}
Now, taking the $\od$ product of \eqref{e-hh4} with $\l$ and \eqref{e-hh5} with $1-\l$ and using \eqref{e-hh6}, \eqref{e-hh7}, \eqref{e-hh8}, we obtain
\begin{equation}\label{e-hh9}
	\ell(\l)\le_g {1\over{b-a}}\od \int^\op_{[a,b]} f(x)\ot dx\le_g L(\l). 
\end{equation}
Writing $\displaystyle a+b\over 2$ as
$$
{a+b\over 2} ={\l\over 2}(\l b+(2-\l)a) + \big(\frac{1-\l}{2}\big)(1+\l)b + (1-\l)a
$$
and using the pseudo-convexity, we get
\begin{equation}\label{e-hh10}
f\left({a+b\over 2}\right)\le_e \ell(\l)	
\end{equation}
and from the pseudo-convexity applied on $L(\l)$, we get
\begin{equation}\label{e-hh11}
L(\l)\le_g 	{1\over 2}\od \big(f(a)\op f(b)\big).
\end{equation}
Now, the inequalities \eqref{e-hh3} follow from \eqref{e-hh10} and \eqref{e-hh11}.
\end{proof}

\begin{remark}
Similar to the pseudo-convexity, one can define pseudo-concavity: 	A function $f:[a,b]\to \Rg$ is said to be pseudo-concave on $[a,b]$ if for all $x,y\in[a,b]$ and all $0\le\lambda\le 1$
$$
f(\l x + (1-\l)y)\ge_g \l\od f(x)\op (1-\l)\od f(y).
$$
Theorems \ref{t-hh1} and \ref{t-hh5} can be formulated and proved with pseudo-convexity relaced by pseudo-concavity.
\end{remark}

\section{Concluding Remarks}

In this paper, the classical inequalities, namely, Young's, H\"older's, Minkowski's and Hermite-Hadamard inequalities have been derived in the framework of $g$-calculus. The entire range of index $p\in\R,\,p\ne 0$ has been covered.

\begin{remark}
	In \cite{mes} (see also \cite{mar1}), Mesiar introduced a generalized $g$-integral
	$$
	\int_{[a,b]}^{g,h}f(x)\ot dx = \gi\left(  \int_a^b (g\circ f)(x)h(x)\,dx  \right),
	$$
	where $h$ is a non-negative integrable real function and the corresponding $g$-derivative
$$
D_{g,g} f(x)=\gi\Big((g\circ f)'(x)/h(x)   \Big).
$$
The results of Section 3 can easily be formulated and proved in terms of the above integral.
\end{remark}

\begin{remark}
It is of interest if the inequalities in Section 4 could be obtained for the generalized integrals $\displaystyle\int_{[a,b]}^{g}$ or $\displaystyle\int_{[a,b]}^{g,h}$ instead of $\displaystyle\int_{[a,b]}^\op$.
\end{remark}
%\newpage


\begin{thebibliography}{99}
	
\bibitem {agahi} H. Agahi, Y. Ouyang, R. Mesiar, E. Pap, Endre and M. \v SŠtrboja,  \textit{H\"older and Minkowski type inequalities for pseudo-integral}, Appl. Math. Comput., 217 (2011), 8630--8639.

%\bibitem {} M. Bessenyei and Z. P\' ales,  \textit{Higher-order generalizations of Hadamard's inequality}, Publ. Math. Debrecen, 61 (2002), 623--643.

%\bibitem {} M. Bessenyei and Z. P\' ales, \textit{Hadamard-type inequalities for generalized convex functions}, Math. Inequal. Appl., 6 (2003),  379--392.
	
\bibitem{bc} A. Boccuto and D. Candeloro, \textit{Differential calculus in Riesz spaces and applications to g-calculus}, Mediterr. J. Math., 8 (2011), 315--329.

\bibitem{cho} G. Choquet,  \textit{Theory of capacities}, Ann. Inst. Fourier (Grenoble), 5 (1953/54), 131--295.

\bibitem{far} A. El Farissi,  \textit{Simple proof and refinement of Hermite-Hadamard inequality}, J. Math. Inequal., 4 (2010), 365--369.

\bibitem{ich} H. Ichihashi, H. Tanaka and K. Asai, \textit{Fuzzy integrals based on pseudo-additions and multiplications}, J. Math. Anal. Appl., 130 (1988),  354--364.

\bibitem{ling} C.H. Ling, \textit{Representation of associative functions}, Publ. Math. Debrecen, 12 (1965), 189--212.

\bibitem{mar} A. Marko\' v, \textit{A note on g-derivative and g-integral}, Real functions '94 (Liptovský Ján, 1994), Tatra Mt. Math. Publ. 8 (1996), 71--76. 

\bibitem{mar1} A. Marko\' v and B. Rie\v can, \textit{ On the double g-integral}, Novi Sad J. Math. 26 (1996), 161--171.

\bibitem{mas} V.P. Maslov, \textit{Asymptotic methods for solving pseudodifferential equations} (in Russian) "Nauka'', Moscow, 1987.

\bibitem{mes} R. Mesiar, \textit{Pseudo-linear integrals and derivatives based on a generator g}, Real functions '94 (Liptovský Ján, 1994), Tatra Mt. Math. Publ. 8 (1996), 67--70.

\bibitem{pap1} E. Pap, \textit{An integral generated by a decomposable measure}, Zb. Rad. Prirod.-Mat. Fak. Ser. Mat., 20 (1990), 135--144.

\bibitem{pap2} E. Pap, \textit{Decomposable measures and applications on nonlinear partial differential equations} Measure theory (Oberwolfach, 1990). Rend. Circ. Mat. Palermo (2) Suppl. No. 28 (1992), 387–403.

\bibitem{pap3} E. Pap, \textit{The Lebesgue decomposition of the null-additive fuzzy measures}, Zb. Rad. Prirod.-Mat. Fak. Ser. Mat., 24 (1994), 129--137.

\bibitem{pap4} E. Pap, \textit{g-calculus}, Zb. Rad. Prirod.-Mat. Fak. Ser. Mat., 23 (1993),  145--156.

\bibitem{pap5} E. Pap, \textit{Extension of $\op$-decomposable measures}, Atti Sem. Mat. Fis. Univ. Modena, 41 (1993), 109--119.

\bibitem{pap6} E. Pap, M. \v Strboja and I. Rudas, \textit{Pseudo-$L^p$ space and convergence}, Fuzzy Sets and Systems, 238 (2014), 113--128.

\bibitem{flores} H. Rom\' an-Flores, A. Flores-Franuli\v c and Y. Chalco-Cano, \textit{A Jensen type inequality for fuzzy integrals}, Inform. Sci., 177 (2007),  3192--3201.

\bibitem{sug} M. Sugeno, \textit{Theory of fuzzy integrals and its application}, Doctoral dissertation, Tokyo Institute of Technology, 1974.

\bibitem{weber1} S. Weber, \textit{$\perp$-decomposable measures and integrals for Archimedean t-conorms $\perp$}, J. Math. Anal. Appl., 101 (1984), 114--138.

\bibitem{weber2} S. Weber, \textit{Measures of fuzzy sets and measures of fuzziness}, Fuzzy Sets and Systems 13 (1984), 247--271.

\bibitem{zadeh} L.A. Zadeh, \textit{Fuzzy sets}, Inform.  Contr. 8 (1965), 338--353.


	
\end{thebibliography}
\end{document}